\def\ocirc#1{\ifmmode\setbox0=\hbox{$#1$}\dimen0=\ht0
    \advance\dimen0 by1pt\rlap{\hbox to\wd0{\hss\raise\dimen0
    \hbox{\hskip.2em$\scriptscriptstyle\circ$}\hss}}#1\else
    {\accent"17 #1}\fi}
 \newtheorem{thm}{Theorem}
\newtheorem{lem}[thm]{Lemma}
\newtheorem{cor}[thm]{Corollary}
\newtheorem{exmp}[thm]{Example}
\newtheorem{rem}[thm]{Remark}
\DeclareMathOperator{\dist}{dist}
\DeclareMathOperator{\Per}{Per}
\DeclareMathOperator{\Aper}{Aper}
\DeclareMathOperator{\Tran}{Tran}
\DeclareMathOperator{\Prox}{Prox}
\DeclareMathOperator{\Rec}{Rec}
 \newcommand{\Zp}{{\mathbb{N}_0}}
 \newcommand{\Cir}{\mathbb{S}^1}
 \newcommand{\orbp}{\text{Orb}^+}
 \newcommand{\eps}{\varepsilon}
 \newcommand{\ra}{\rightarrow}
 \newcommand{\F}{\mathscr{F}}
 \newcommand{\B}{\mathscr{F}_{inf}}
 \newcommand{\N}{\mathbb{N}}
 \newcommand{\Z}{\mathbb{Z}}
 \newcommand{\set}[1]{\left\{#1\right\}}
\newcommand{\disj}[2]{#1 \:\bot\: #2}
\numberwithin{enumi}{thm}
\begin{document}


\title[On weak product recurrence]{On weak product recurrence and synchronization of return times}

\author[Piotr Oprocha]{Piotr Oprocha}
\address[P. Oprocha]{AGH University of Science and Technology\\
Faculty of Applied Mathematics\\
al. A. Mickiewicza 30, 30-059 Krak\'ow,
Poland\\ -- and --\\Institute of Mathematics\\ Polish Academy of Sciences\\ ul. \'Sniadeckich 8, 00-956 Warszawa, Poland} \email{oprocha@agh.edu.pl}

\author[Guohua Zhang]{Guohua Zhang*}
\address[G.~H.~Zhang]{School of Mathematical Sciences and LMNS, Fudan University, Shanghai 200433, China}

\email{chiaths.zhang@gmail.com}

\begin{abstract}
The paper is devoted to study of product recurrence. First, we prove that notions of $\F_{ps}-PR$ and $\F_{pubd}-PR$ are exactly the same as product
recurrence, completing that way results of [P. Dong, S. Shao and X. Ye, \emph{Product recurrent properties, disjointness and
weak disjointness}, Israel J. Math.],
 and consequently, extending the characterization of return times of distal points which originated from works of Furstenberg.
We also study the structure of the set of return times of weakly mixing sets. As a consequence, we obtain new sufficient conditions for $\F_{s}-PR$ and also find a short proof that weakly mixing systems are disjoint with all minimal distal systems (in particular, our proof does not involve Furstenberg's structure theorem of minimal distal systems).
\end{abstract}

\keywords{product recurrence, weak mixing, disjointness, distal, weak product recurrence}

\subjclass[2000]{37B20 (primary), 37B05 (secondary)}

\thanks{*Corresponding author (chiaths.zhang@gmail.com)}

\maketitle

\section{Introduction}

There are two important characterizations related to distality, both observed by Furstenberg more than 30 years ago.
If a point is distal, then it is recurrent in pair with any recurrent point in any dynamical system \cite{FurBook}
and if a minimal dynamical system is distal then it is disjoint with any weakly mixing system \cite{FurDA}.
It is worth emphasizing that later, a full characterization of flows disjoint with all distal flows was provided by Petersen in \cite{Petersen}.
While proofs of both above mentioned characterizations are not that long, their proofs highly rely on other, even more important and highly nontrivial results.
Namely, first of them uses Hindman's theorem on finite partitions of IP-sets, while second can be obtained as a consequence of an algebraic characterization of distal flows, proved by Furstenberg in \cite{STDF}.

The above characterizations on synchronization of return times of a point with return times of distal points gave motivation to two directions of research.
First of them asks about synchronization of return times between
different types of recurrent points and second asks about disjointness between specified classes of systems. Both these questions lead to partial classifications and hard open problems \cite{Ye_WPR}.
It is also worth emphasizing that studies on the above topics were
very influential and important for topics which at the first sight do not seem to be related very much.
For example, Blanchard characterized in \cite{Bla} systems disjoint with flows with zero entropy giving that way a good motivation
for introducing entropy pairs and systems with uniform positive entropy (u.p.e.) which are two fundamental notions in local entropy theory
which were used later for deep and insightful investigations on topological entropy (cf. \cite{GY} for a more story of local entropy theory). In general, results on synchronization of trajectories
can be of wide use (e.g. they can help to simplify some arguments in proofs etc.). For example, analysis of return or transfer times of points
lead to a simple proof that distal point is always minimal or that point minimal or recurrent for a dynamical system $(X,f)$ is, respectively, minimal and recurrent
for $(X,f^n)$ for every $n$, etc.

If in place of recurrence in pair with any recurrent point we demand recurrence in pair with points in a smaller class of dynamical systems, it can lead to
a wider class of points than the class of all distal points. For example, Auslander and Furstenberg  in \cite{AusFur} asked about points which are recurrent in pair with any minimal point. While there is no known full characterization of points with this property,
it was proved in \cite{Ott} that class of such points is much larger than distal points, in particular it contains many points which are not minimal.
Other sufficient conditions for this kind of product recurrence were provided in \cite{Ye_WPR} and \cite{PO_AIF}.
Moreover, \cite{Ye_WPR} defines product recurrence in terms of Furstenberg families (i.e. upward hereditary sets of subsets of $\N$), which
is a nice tool for a better classification of product recurrence. It is worth emphasizing that the concepts of \cite{Ye_WPR} are not artificial, since it is possible almost
immediately to relate these new types of product recurrence with some older results on disjointness.

The present paper completes some previous studies on recurrence and product recurrence from \cite{Ye_WPR} and \cite{PO_AIF}.
First, we prove that notions of $\F_{ps}-PR$ and $\F_{pubd}-PR$ are exactly the same as product
recurrence, that is, if return times of a point can be synchronized with points returning with a piecewise syndetic set of times, then it can be synchronized with any recurrent point. This provides another condition to the list of conditions equivalent to distality as first provided by Furstenberg in \cite[Theorem 9.11]{FurBook}, and next extended by many authors (e.g. see \cite{Ye_WPR}).
Next, we analyze synchronizing properties of points in weakly mixing sets, which allow us to show that any weakly mixing set with dense distal points contains
a residual subset of $\F_{s}-PR$ points which are not distal (this is a question left open in \cite{PO_AIF}) and also to prove Furstenberg's result on disjointness between
weakly mixing systems and minimal distal systems without referring to Furstenberg's structure theorem of distal flows. This is especially nice, since now both results of Furstenberg
mentioned in the first paragraph of this introduction can be obtained using only Hindman's theorem plus some topological arguments. This even more bonds these two results together.


\section{Preliminaries}
In this section, we will provide some basic definitions used later in this paper.
The reader is encouraged to refer to the books \cite{Aki93, Aki97, HS} for more details.

Denote by $\mathbb{N}$ ($\Zp, \mathbb{Z}, \mathbb{R}$, respectively) the set of all positive integers (non-negative integers, integers, real numbers, respectively).
A set $A \subset \N$ is an \emph{IP-set} if there exists a sequence $\set{p_i}_{i=1}^\infty \subset \N$
such that $A$ consists exactly of numbers $p_i$ together with all finite sums
$$
p_{n_1} + p_{n_2} + \cdots + p_{n_k} \text{ with } n_1 < n_2 < \cdots < n_k, k\in \N.
$$

%
%
%

\subsection{Basic notions in topological dynamics}
By a \emph{(topological) dynamical system}, or \emph{TDS} for short, we mean a pair consisting of a compact metric space $(X,d)$ and a continuous map $f\colon X\ra X$.
We denote the \emph{diagonal} in $X\times X$ by $\Delta_2 (X)=\set{(x,x): x\in X}$.

The \emph{(positive) orbit of $x\in X$ under $f$} is the set $\orbp (x, f)= \set{f^n (x)\; :\; n\in \N}$. We denote
$N_f(x,A)=\set{n\in \N : f^n(x)\in A}$ and  similarly $N_f(A,B)= \{n\in \N : f^n(A)\cap B\neq \emptyset\}$.

We say that $x$ is 
a {\it periodic point of $(X, f)$} if $f^n (x)= x$ for some $n\in \mathbb{N}$; a {\it recurrent point of $(X, f)$} if $N_f(x,U)\neq \emptyset$ for any open set $U\ni x$; a {\it transitive point of $(X, f)$} if $\overline{\orbp (x, f)}= X$.
Denote by $\Per (X, f)$ ($\Rec (X, f)$ and $\Tran (X, f)$, respectively) the set of all periodic points (recurrent points and transitive points, respectively) of $(X, f)$.

Recall that $(X, f)$ is {\it transitive} if $N_f(U,V)\neq \emptyset$ for any non-empty open sets $U$ and $V$ and {\it minimal} if $\Tran (X, f)= X$.
A point $x\in X$ is \emph{minimal} or \emph{uniformly recurrent} if $(\overline{\orbp(x,f)},f)$ is minimal. A dynamical system $(X, f)$ is an \emph{M-system} if it is transitive and the set of all minimal points is dense in $X$.

A pair of points $x,y\in X$ is \emph{proximal} if there exists an increasing sequence
$\set{n_k}_{k=1}^\infty\subset \N$ such that $\lim_{k\ra \infty} d(f^{n_k}(x),f^{n_k}(y))=0$.
A point $x$ is \emph{distal} if it is not proximal to any point in its orbit closure other than itself.
It was first observed by Auslander and Ellis that any point is proximal to a minimal point from its orbit closure and so any distal point is minimal \cite{Aus, Ellis, FurBook}.
The set of all proximal pairs of $(X,f)$ is denoted by $\Prox(f)$ and a proximal cell of $x\in X$ is denoted by $\Prox(f)(x)=\set{y\in X\; : \; (x,y)\in \Prox(f)}$.
We say that $(X,f)$ is \emph{distal} if every point $x\in X$ is a distal point. Equivalently, it is to say that there is no proper proximal pair in $X$ (i.e. $\Prox(f)=\Delta_2 (X)$).

Let $f,g$ be two continuous surjective maps acting on compact metric spaces $X$ and $Y$ respectively.
We say that a non-empty closed set $J\subset X\times Y$  is \emph{a joining} of $(X,f)$ and $(Y,g)$ if it is invariant (for the product map $f\times g$) and its projections onto the first and second coordinates are $X$ and $Y$ respectively.
If each joining is equal to $X\times Y$ then we say that $(X,f)$ and $(Y,g)$ are \emph{disjoint} and denote this fact by $\disj{(X,f)}{(Y,g)}$ or simply by $\disj{f}{g}$.

\subsection{Families and product recurrence}

\emph{A (Furstenberg) family} $\F$ is a collection of subsets of $\N$ which is \emph{upwards hereditary},
that is:
$$
F_1 \in \F \text{ and } F_1 \subset F_2
\quad \Longrightarrow \quad F_2 \in \F.
$$
A family $\F$ is \emph{proper} if $\N \in \F$ and $\emptyset \notin \F$ and is a \emph{filter} if it is a proper family closed under finite intersections (equivalently $A\cap B\in \F$ for every $A,B\in \F$). Among filters there is an important class which is maximal with respect to set inclusion.
Any such a filter is called an \emph{ultrafilter}. Note that by Zorn's Lemma every
filter is contained in some ultrafilter. We denote by $\beta \N$ the set of all ultrafilters of $\N$.

 For any $n\in \N$ we can define its \emph{principal ultrafilter} $e(n)=\set{A\subset \N : n\in A}$, therefore
we can write $\N\subset \beta \N$ by the natural identification.
Given $A\subset \N$ we set $\hat{A}=\set{p\in \beta \N : A\in p}$, and then we can define a topology on $\beta \N$ which has $\set{\hat{A} : A\subset \N}$ as its basis. It can be proved that $\beta \N$, equipped with the above introduced topology, coincides with {S}tone-\v {C}ech compactification of the discrete space $\N$, in particular, $\beta \N$ is
a compact Hausdorff space and the set $\set{e(n) : n\in \N}$ is a dense subset of $\beta\N$ whose points are precisely the isolated points of $\beta \N$ (e.g. see \cite[Theorem~3.18 and Theorem~3.28]{HS}).
For any $p,q\in \beta\N$ we define
$$
p + q = \set{A\subset \N : \set{n\in \N : -n + A\in q} \in p},
$$
and we call $p$ an \emph{idempotent} if $p+ p= p$.
It can be proved (e.g. see \cite[Chapter 4]{HS}) that $(\beta\N, +)$ is a right topological semigroup with $\N$ contained in its topological center, in the sense that $e(n)+e(m)=e(n+m)$ for all $n, m\in \N$ and
\begin{enumerate}[(i)]
\item for each $q\in \beta \N$, the map $\rho_q:\beta\N \ni p \mapsto p+q\in \beta \N$ is continuous,
\item for each $n\in \N$, the map $\lambda_n:\beta\N \ni p \mapsto e (n)+p\in \beta \N$ is continuous.
\end{enumerate}
For a more detailed exposition on {S}tone-\v {C}ech compactifications, in particular the case of $\beta \N$, the reader is referred to the book \cite{HS} by Hindman and Strauss.

Recall that a set $A\subset \N$ is \emph{thick} if for every $n>0$ there is an $i\in \N$ such that $\set{i,i+1,\cdots, i+n}\subset A$; is \emph{syndetic} if $A$ has a bounded gap, that is, there exists $N\in \N$ such that $[i,i+N]\cap A\neq \emptyset$ for each $i\in \N$. We denote by $\B$, $\F_t$ and $\F_s$ the family of all infinite subsets, thick subsets and syndetic subsets of $\N$, respectively.
It is direct to see that each thick subset intersects all syndetic subsets. We denote by $\F_{ps}$ the family of all \emph{piecewise syndetic} sets, that is sets that can be obtained as the intersection of a thick set and a syndetic set.
We denote by $\F_{pubd}$ the family of sets with \emph{positive upper Banach density}, that is sets $F\subset \N$ such that
$$
\limsup_{n-m\to\infty}\frac{\# (F\cap\{m,m+1,\cdots,n\})}{n-m+1}>0,
$$
where as usual $\# A$ denotes the cardinality of a set $A$.

Families may be used to state definitions of recurrent points with prescribed types of the set of return times.
Namely, for a family $\F$  and $x\in X$, we say that $x$ is \emph{$\F$-recurrent} if $N_f(x,U)\in \F$ for any open neighborhood $U$ of $x$. Note that a point is recurrent exactly when it is $\B$-recurrent
and is minimal when it is $\F_s$-recurrent. For an interesting exposition on recurrence properties expressed in
terms of families the reader is referred to the book \cite{Aki97} by Akin.

A recurrent point $x$ in a dynamical system $(X,f)$ is \emph{product recurrent} if given any recurrent point $y$ in any dynamical system $(Y,g)$ the pair $(x,y)$ is recurrent for the product system $(X\times Y, f\times g)$. If we demand above condition only for $y$ which is uniformly recurrent then we say that $x$ is \emph{weakly product recurrent}. Again, let $\F$ be a family. A recurrent point $x$ in a dynamical system $(X,f)$ is \emph{$\F$-product recurrent} (\emph{$\F$-PR} for short) if for any $\F$-recurrent point $y$ in any dynamical system $(Y,g)$, the pair $(x,y)$ is recurrent for $(X\times Y, f\times g)$.
Thus, a recurrent point is product recurrent if and only if it is $\F_{inf}$-PR, and is weakly product recurrent if and only if it is $\F_{s}$-PR.

It is well known that a point is product recurrent if and only if it is distal \cite{FurBook} and it was recently proved in \cite{Ott} that there are weakly product recurrent points which are not distal (in fact, they form a much wider class of points).
Properties of product recurrence were studied independently in \cite{Ye_WPR} and \cite{PO_AIF}, where some further necessary conditions for product recurrence were obtained.
It is obvious that
$$
\F_{inf}-PR \quad \Longrightarrow \quad \F_{pubd}-PR \quad \Longrightarrow \quad \F_{ps}-PR.
$$
The question whether any of the above implications can be reverted was left open in \cite{Ye_WPR}. We will answer this question later in this paper.

\subsection{Weakly mixing sets}

A closed set $A\subset X$ containing at least two points is a \emph{weakly mixing set of order $n$} if for any
choice of open subsets $V_1, U_1, \cdots, V_n, U_n$
of $X$ with $A \cap U_i \neq \emptyset$, $A \cap V_i \neq \emptyset$, $i=1,\cdots,n$, there exists $k>0$ such that $f^k(V_i\cap A) \cap U_i \neq \emptyset$
for each $1 \leq i \leq n$.
If $A$ is weakly mixing of order $n$ for all $n\geq 2$, then we say that $A$ is \emph{weakly mixing of all orders}, or simply \emph{weakly mixing}.

The idea of weakly mixing sets comes from \cite{BH} where it was studied firstly in considerable detail.
In particular, it was
shown that every system with positive entropy contains weakly mixing sets \cite{BH}. While, we are looking at the a priori weaker property of weak mixing of order $n$ and especially 2 in \cite{PZ, PZpreprint}. It was
proved in \cite{PZpreprint} that for each $n\geq 2$ there exists a minimal system containing weakly mixing sets of order $n$ but without
weakly mixing sets of order $n+1$ (in \cite{PZ} such an example was constructed only for the particular case of $n=2$).

\begin{rem}
The definition of weakly mixing set (of order $n$) introduced here is a little more restrictive (i.e. more conditions are put on $A$)
than that in \cite{PZ}, where it was introduced first.
\end{rem}

It can be proved easily that each weakly mixing set of order $2$ (as introduced here) is perfect \cite{PZ}.
If the whole $X$ is a weakly mixing set then we say that $(X,f)$ is weakly mixing, which equivalently means that $(X\times X,f\times f)$ is transitive. Furstenberg proved that for an invariant subset, weak mixing of order 2, implies weak mixing of all orders \cite[Proposition II.3]{FurDA}
(i.e. on invariant subsets, these notions coincide).

%
%

\section{Piecewise syndetic product recurrence} \label{section}

As we mentioned earlier, the question whether any of the implications below can be reverted was left open in \cite[Section~5.3]{Ye_WPR}:
$$
\F_{inf}-PR \quad \Longrightarrow \quad \F_{pubd}-PR \quad \Longrightarrow \quad \F_{ps}-PR.
$$
In this section we will prove that all the above properties are equivalent, and then as a corollary of this equivalence we can extend an important characterization of distal points from \cite[Theorem 9.11]{FurBook}.

To settle down the question we need the following notions.
A set $S\subset \N$
is a \emph{dynamical syndetic set} if there exists a minimal dynamical system $(X, f)$ with a minimal point $x\in X$ and an open neighborhood $U$ of $x$ such that $S= N_f (x,U)$.
A set $J\subset \N$ is an \emph{md-set} if there exists an M-system $(Y, g)$ with a transitive point $y\in Y$ and a neighborhood $V$ of $y$ such that $J= N_g (y, V)$.

It was proved in \cite[Proposition 3.3]{Ye_WPR} that every thick set contains an md-set.
The following lemma shows a similar result for a finer structure.

\begin{lem} \label{1202211736}
Let $S_1$ and $S_2$ be a dynamical syndetic set and a thick set, respectively. Then $S_1\cap S_2$ contains an md-set
$N_g(z,W)$ defined by an $\F_{ps}$-recurrent and transitive point $z$ in an M-system $(Y,g)$ and an open neighborhood $W$ of $z$.
\end{lem}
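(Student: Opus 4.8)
The plan is to realize the required M-system very concretely, as the product of the given minimal system with the full two-shift, and then to build the transitive point by hand. First record the combinatorial content. Since $S_1=N_f(x,U)$ comes from a minimal system, $x$ is uniformly recurrent, so $S_1$ is syndetic, say with gaps bounded by some $N_1$; as $S_2$ is thick, $P:=S_1\cap S_2$ is piecewise syndetic, and inside any block $[a,a+\ell]\subseteq S_2$ one has $P\cap[a,a+\ell]=S_1\cap[a,a+\ell]$, still with gaps $\le N_1$. What I want is an M-system $(Y,g)$ with a transitive point $z$ and a neighbourhood $W$ of $z$ such that $N_g(z,W)\subseteq P$. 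I take $Y=X\times\{0,1\}^{\Zp}$ and $g=f\times\sigma$, where $\sigma$ is the shift. This is an M-system: it is transitive because the full shift is topologically mixing and $X$ is minimal, so for basic open sets the hitting-time set splits as $N_\sigma(C_1,C_2)\cap N_f(U_1,U_2)$, the first factor being cofinite and the second syndetic, hence they meet; and its minimal points are dense, since for any periodic point $w$ of $\sigma$ of period $q$ and any $x'\in X$ the point $(x',w)$ is minimal. Indeed $x'$ remains a minimal point of $(X,f^q)$ (minimality is inherited by powers, as recalled in the introduction), and the orbit closure of $(x',w)$ is the union of the pieces $f^i\,\overline{\orbp(x',f^q)}\times\{\sigma^iw\}$, each minimal under $(f\times\sigma)^q=f^q\times\id$ and cyclically permuted by $g$.

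Next I construct the transitive point. Put $W=U\times[1]_0$ with $[1]_0=\{b:b(0)=1\}$; then $N_g((x,w),W)=S_1\cap\{k:w(k)=1\}$, so it suffices to produce $w$ with $\{k\ge 1:w(k)=1\}\subseteq S_2$ for which $(x,w)$ is transitive in $Y$. Enumerate a countable base of $Y$ as pairs $(U_i',C_i)$ with $C_i$ a cylinder determined by a finite pattern $p_i$. Choose pairwise disjoint blocks $B_i\subseteq S_2$ with $|B_i|$ far larger than $|p_i|+N_1$; inside $B_i$ every coordinate lies in $S_2$, so any pattern may be written there with all its $1$'s legal, and since $N_f(x,U_i')$ is syndetic there is a time $k_i\in B_i$ with $[k_i,k_i+|p_i|]\subseteq B_i$ and $f^{k_i}x\in U_i'$. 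Define $w$ by writing $p_i$ at position $k_i$ for each $i$, setting $w(0)=1$, and placing $0$ elsewhere. Then $\{w=1\}\subseteq\bigcup_i B_i\subseteq S_2$, so $N_g((x,w),W)\subseteq P$, while $(f\times\sigma)^{k_i}(x,w)\in U_i'\times C_i$ shows the orbit of $z:=(x,w)$ is dense, i.e. $z$ is transitive.

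It remains to see that $z$ is $\F_{ps}$-recurrent, which I would isolate as a general fact: every transitive point $z$ of an M-system is $\F_{ps}$-recurrent. Given a neighbourhood $W'$ of $z$, density of minimal points yields a minimal $\xi\in W'$; fixing $V\ni\xi$ with $\overline V\subseteq W'$, the set $N_g(\xi,V)$ is syndetic with gaps $\le N$. For any length $L$, uniform continuity of $g,\dots,g^L$ gives $\delta$ so that whenever $g^tz$ lies within $\delta$ of $\xi$ one has $g^{t+j}z\in W'$ for every $j\le L$ with $g^j\xi\in V$; transitivity of $z$ supplies such a $t$, so $N_g(z,W')$ contains a set with gaps $\le N$ on an interval of length about $L$. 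Letting $L\to\infty$ shows $N_g(z,W')$ is piecewise syndetic. Consequently $N_g(z,W)$ is an md-set contained in $S_1\cap S_2$ and defined by the $\F_{ps}$-recurrent transitive point $z$, which is exactly the assertion.

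I expect the main obstacle to be precisely the M-system property. The tempting choice $Y=\overline{\orbp((x,\mathbf 1_{S_2}),f\times\sigma)}$ fails, because the subshift generated by $\mathbf 1_{S_2}$ typically has very few minimal points (only the fixed points $\bar0,\bar1$ when $S_2$ is a sparse union of long blocks), so points whose second coordinate exhibits a $1$-to-$0$ transition admit no nearby minimal point and density breaks down. The device that removes this is to pass to the product with the \emph{full} shift, which is mixing with dense periodic points, and to recover the constraint $\{w=1\}\subseteq S_2$ not by an orbit closure of a single prescribed sequence but by writing the prescribed patterns only inside the long blocks of $S_2$. The verification that thickness supplies infinitely many disjoint arbitrarily long blocks, and the density/transitivity bookkeeping, are then routine.
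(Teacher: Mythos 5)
Your proof is correct, but it takes a genuinely different route from the paper's. The paper stays inside the two-sided full shift $\Sigma_2$: starting from the $0$--$1$ sequence recording the visits of $x$ to $U$, restricted to the blocks of $S_2$, it runs a delicate inductive thinning construction, producing points $z_x^{(m)}$ and piecewise syndetic sets $A^{(m)}$ of positions where the central word of $z_x^{(m)}$ reappears; the limit point $z$ is then $\F_{ps}$-recurrent by design, its $1$'s sit inside $S_1\cap S_2$, and the M-property of the orbit closure of $z$ is deduced from the Huang--Ye characterization (the orbit closure of a transitive, $\F_{ps}$-recurrent point is an M-system). You travel around that same characterization in the opposite direction: you exhibit the M-system explicitly as the product $X\times\{0,1\}^{\Zp}$ with $f\times\sigma$ (M-property proved directly: transitivity from mixing of the full shift against syndetic hitting times in the minimal factor, dense minimal points from periodic second coordinates), you build the transitive point $(x,w)$ by writing the required patterns only inside long blocks of $S_2$ at visit times of $x$, and you then get $\F_{ps}$-recurrence for free from the general fact that a transitive point of an M-system is $\F_{ps}$-recurrent --- which is the converse direction of the same Huang--Ye lemma, and which you reprove correctly. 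What the paper's construction buys is a subshift realization obtained by purely symbolic bookkeeping; what yours buys is a substantial simplification, since the hard combinatorial induction is replaced by soft facts about product systems, and your closing remark correctly identifies the obstruction (no dense minimal points in the orbit closure of a single constrained sequence) that the paper's induction is designed to defeat.

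One small slip: the length of the block $B_i$ must dominate $|p_i|$ plus a syndeticity constant of $N_f(x,U_i')$, a quantity which depends on $U_i'$ and has nothing to do with the constant $N_1$ of $S_1=N_f(x,U)$. Since $S_2$ is thick you may of course choose $B_i$ as long as you wish, so the fix is immediate, but as written the bound $|p_i|+N_1$ is not the right one.
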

\begin{proof}
From the definition, there exists a minimal dynamical system $(X, f)$ with a minimal point $x\in X$ and an open neighborhood $U$ of $x$ such that $S_1= N_f (x, U)$.
Moreover, as $S_2$ is a thick set, there is an increasing sequence $n_j$ such that
$n_{j+1}-n_j>j^2$ and $B_j\subset S_2$ with $B_j= [n_j,n_j+j)\cap \N$. Note that sets $B_j$ are pairwise disjoint.
Let us renumerate sequence $\set{B_j}_{j=1}^\infty$ creating a double indexed family $\set{B_j^{(i)}}_{i,j=1}^\infty$ with the property that
$k<s$ whenever $k\in B^{(i)}_1$, $s\in B^{(i+1)}_1$ or $k\in B^{(i)}_j$, $s\in B^{(i)}_{j+1}$ for some indices $i,j=1,2,\cdots$.

Now we will perform an inductive construction of points $z_x^{(i)}\in \Sigma_2$ and sets $A^{(i)}\subset \N$, where
by $\Sigma_2$ we denote the two-sided full shift over the alphabet $\set{0,1}$ (together with the left shift transformation $\sigma$). If $u,v\in \set{0, 1}^k$ then we write $u \preccurlyeq v$ if $u[i]\leq v[i]$ for every $0\leq i < k$. We can extend easily this relation onto infinite sequences and next to bi-infinite sequences $y\in \Sigma_2$ such that $y[i]=0$ for $i<0$. Define a point $z_x\in \Sigma_2$ by putting $z_x[i]=0$ if $i<0$, $z_x[i]=0$ if $f^i(x)\not\in U$ for some $i\geq 0$ and $z_x[i]=1$ in all other situations.

Now, let us put $z_x^{(1)}[i]=1$ when $i=0$ or when $f^i(x)\in U$ and $i\in \bigcup_{j=1}^\infty B_j$. For all other values of $i$ we put $z_x^{(1)}[i]=0$. Then $z_x^{(1)}\preccurlyeq z_x$. Define
$$A^{(1)}=\set{i\in \N: f^i(x)\in U, i\in \bigcup_{j=1}^\infty B^{(1)}_j}.$$
The point $x$ is minimal, so clearly $A^{(1)}$ is piecewise syndetic and $z_x^{(1)}[k]=1$ for every $k\in A^{(1)}$.

Assume that for some $m\geq 1$ we have constructed points $z_x^{(m)}\preccurlyeq z_x^{(m-1)} \preccurlyeq \cdots \preccurlyeq  z_x^{(1)}$ and piecewise syndetic sets $A^{(1)},\cdots, A^{(m)}$ with the following additional properties:
\begin{enumerate}
\item\label{zxind:c1} $z^{(s)}_x[k]=z^{(s+1)}_x[k]$ provided that $k\not\in \bigcup_{j=1}^\infty B_j^{(s+1)}$, where $s<m$;
\item\label{zxind:c2} $(k-s,k+s)\cap \Z \subset  \bigcup_{j=1}^\infty B_j^{(s)}$ for every $k\in A^{(s)}$, where $s=1,\cdots,m$; and
\item\label{zxind:c3} $(z_x^{(s)})_{(-s, s)}=(z_x^{(s)})_{(k-s, k+s)}$ for every $k\in A^{(s)}$, where $s=1,\cdots,m$.
\end{enumerate}

Now we are going to construct $z_x^{(m+1)}$ and $A^{(m+ 1)}$.

First we put $z_x^{(m+1)}[k]=z_x^{(m)}[k]$ for every $k\not\in \bigcup_{j=1}^\infty B_j^{(m+1)}$.
On other positions we will copy only a part of symbols $1$ from $z_x^{(m)}$, changing into $0$ some of them. Note that by the construction we have
$z_x^{(m)}[k]=z_x^{(1)}[k]$ for every $k\in \bigcup_{s>m}\bigcup_{j=1}^\infty B_j^{(s)}$.

 There is an integer $t$ such that if $a\in B^{(m+1)}_j$, $b\in B^{(m+1)}_{j+1}$ for some $j\geq t$ then $b-a>3m$. In particular, if $i-m,i+m\in \bigcup_{j=t}^\infty B_j^{(m+1)}$ for some $i\in \N$ then
there is $j\ge t$ such that $[i-m,i+m]\cap \N \subset B_j^{(m+1)}$.
We choose an open set $V\ni x$ such that, for $0\leq j \leq m$,
$f^j(x)\in U$ implies $f^j(V)\subset U$.
Obviously the following set is piecewise syndetic:
$$
A=\set{k\in \N: f^k(x)\in V \text{ and }k-m\in \bigcup_{j=t}^\infty B_j^{(m+1)}, k+m\in \bigcup_{j=t}^\infty B_j^{(m+1)}}.
$$
We can remove elements in $A$ if necessary, keeping it piecewise syndetic and at the same time ensuring that $|a-b|>3m$ whenever $a,b\in A$ are distinct. Denote by $A^{(m+1)}$ the set $A$ after this modification. This modification ensures that if we put $(z_x^{(m+1)})_{[k,k+m]}=(z_x^{(m)})_{[0,m]}$ for
all $k\in A^{(m+1)}$ and put $z_x^{(m+1)} [i]=0$ for all $i\in \bigcup_{j=1}^\infty B_j^{(m+1)}\setminus \bigcup_{i\in A^{(m+1)}}[i,i+m]$
then $z_x^{(m+1)}$ is well defined (simply because $|a- b|> 3 m$ and so $[a-m,a+m]\cap [b-m,b+m]=\emptyset$ for distinct $a,b\in A^{(m+1)}$).

Now it remains to check that $z_x^{(m+1)}$ and $A^{(m+1)}$ satisfy all desired properties. Directly from the construction we have that $A^{(m+1)}$
is piecewise syndetic and conditions \eqref{zxind:c1}, \eqref{zxind:c2} are satisfied.
Note that $\min \bigcup_{j=1}^\infty B_j^{(m+1)}>m$ therefore $(z_x^{(m+1)})_{[0,m]}=(z_x^{(m)})_{[0,m]}$ which also gives \eqref{zxind:c3} with the help of the construction.
The only condition which remains is $z_x^{(m+1)}\preccurlyeq z_x^{(m)}$. Fix any $k\in A^{(m+1)}$. Then by the construction we have that
$(z_x^{(m+1)})_{[k,k+m]}=(z_x^{(m+1)})_{[0,m]}$.
Observe that $f^k(x)\in V$ and therefore by the definition of $V$, if $f^j(x)\in U$ for some $0\leq j\leq m$
then $f^{k+j}(x)\in U$. In other words $(z_x)_{[0,m]}\preccurlyeq (z_x^{(1)})_{[k,k+m]}$. Additionally by the construction and \eqref{zxind:c1} we have that $(z_x^{(m)})_{[k,k+m]}=(z_x^{(1)})_{[k,k+m]}$. Combining all these facts together we obtain the following:
\begin{eqnarray*}
(z_x^{(m+1)})_{[k,k+m]}&=&(z_x^{(m+1)})_{[0,m]}=(z_x^{(m)})_{[0,m]}\\
&\preccurlyeq& (z_x)_{[0,m]}\preccurlyeq (z_x^{(1)})_{[k,k+m]}=(z_x^{(m)})_{[k,k+m]}.
\end{eqnarray*}
But for all $i\in \Z \setminus \bigcup_{j\in A^{(m+1)}}[j,j+m]$ we also have $z_x^{(m+1)}[i]\leq z_x^{(m)}[i]$ from the construction, and so $z_x^{(m+1)}\preccurlyeq z_x^{(m)}$
which completes the induction.

By the definition obviously $\set{z_x^{(m)}}_{m= 1}^\infty$ is a Cauchy sequence in $\Sigma_2$, so the limit $z=\lim_{m\to \infty} z_x^{(m)}$ is well defined.
Furthermore, the only modifications of $z_x^{(s)}$ in the further steps of induction are done on the set $Q_s=\bigcup_{i=s+1}^\infty \bigcup_{j=1}^\infty B_j^{(i)}$ and so $z[k]=z_x^{(s)}[k]$ for every $k\not\in Q_s$.
In particular, for every $i\in A^{(s)}$ we have
$$
z_{(-s,s)}=(z_x^{(s)})_{(-s,s)}=(z_x^{(s)})_{(i-s,i+s)}=z_{(i-s,i+s)}
$$
which equivalently means that
$N_\sigma(z,W)\supset A^{(s)}$, where $\sigma$ is the shift transformation over $\Sigma_2$ and $W$ is the cylinder set $\set{q\in \Sigma_2 : q_{(-s,s)}=z_{(-s,s)}}.$ This proves that $z$ is $\F_{ps}$-recurrent.
Denote by $Z$ the closure of the (positive) orbit $\orbp (z, \sigma)$ of $z$ under the shift transformation.
It can be proved (e.g. see \cite[Lemma 2.1]{Huang}) that $(Y, g)$ is an M-system if and only if there is a transitive point $y\in Y$ such that $N_g (y, W)\in \F_{ps}$ for any neighborhood $W$ of $y$. Thus $(Z, \sigma)$ is an M-system and so from the construction $S_1\cap S_2$ contains an md-set $N_\sigma (z, W_1)$, where $W_1$ is the cylinder set $\set{q\in \Sigma_2: q [0]= 1}$.
\end{proof}

%
%
%
%

By characterization from \cite{FurBook}, a point $x$ is distal if and only if $x$ is $\F_{inf}-PR$, so to answer our question it is enough to prove the following theorem.

\begin{thm}\label{thm:fps}
If $x$ is $\F_{ps}$-PR then it is distal.
\end{thm}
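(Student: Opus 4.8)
The plan is to prove the statement in contrapositive spirit: assuming $x$ is $\F_{ps}$-PR, I will show $x$ is distal. Since by \cite{FurBook} a point is distal exactly when it is $\F_{inf}$-PR, and since an $\F_{ps}$-recurrent witness is in particular recurrent, it suffices to rule out non-distality. First I would record that $x$ is minimal. By \cite[Proposition 3.3]{Ye_WPR} every thick set $T$ contains an md-set $N_g(z,W)$, and by \cite[Lemma 2.1]{Huang} the generating transitive point $z$ of the M-system may be taken $\F_{ps}$-recurrent; hence $\F_{ps}$-PR forces $(x,z)$ to be recurrent, so $N_f(x,U)\cap N_g(z,W)\neq\emptyset$ and therefore $N_f(x,U)$ meets the arbitrary thick set $T$. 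Thus $N_f(x,U)$ is syndetic for every neighbourhood $U$ of $x$, i.e. $x$ is $\F_s$-recurrent and $X_0:=\overline{\orbp(x,f)}$ is minimal.

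Now suppose towards a contradiction that $x$ is not distal, so there is $x'\in X_0$ with $x'\neq x$ and $(x,x')\in\Prox(f)$; as $X_0$ is minimal, $x'$ is itself a minimal point and $\overline{\orbp(x',f)}=X_0$. The key observation is that $(x',x)$ is \emph{not} a minimal point of the product system $(X_0\times X_0,f\times f)$. Indeed, choosing $n_k$ with $d(f^{n_k}x,f^{n_k}x')\to 0$ and passing to a subsequence, $f^{n_k}(x',x)\to(a,a)$ for some $a\in X_0$, so the orbit closure of $(x',x)$ contains the diagonal point $(a,a)$, whose own orbit closure equals $\Delta_2(X_0)$ by minimality of $X_0$; were $(x',x)$ minimal, its orbit closure (a minimal set containing the minimal set $\Delta_2(X_0)$) would equal $\Delta_2(X_0)$, forcing $x=x'$. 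Consequently $(x',x)$ fails to return syndetically to some neighbourhood, and shrinking it I may fix disjoint open sets $U'\ni x'$ and $U_0\ni x$ with $\overline{U'}\cap\overline{U_0}=\emptyset$ such that $E:=\set{n\in\N : f^n x'\in U' \text{ and } f^n x\in U_0}$ is not syndetic; equivalently $\N\setminus E$ is thick.

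Finally I would apply Lemma~\ref{1202211736} to the dynamical syndetic set $S_1:=N_f(x',U')$ (realised by the minimal point $x'$ of the minimal system $X_0$ and its neighbourhood $U'$) and the thick set $S_2:=\N\setminus E$. It yields an $\F_{ps}$-recurrent transitive point $z$ of an M-system $(Z,\sigma)$ with $z[0]=1$ and $N_\sigma(z,W_1)\subseteq S_1\cap S_2$, where $W_1=\set{q : q[0]=1}$. For $n\in S_1\cap S_2$ we have $f^n x'\in U'$ while $n\notin E$, whence $f^n x\notin U_0$; thus $N_\sigma(z,W_1)\cap N_f(x,U_0)=\emptyset$. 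Since $x\in U_0$ and $z\in W_1$, this means the pair $(x,z)$ never returns to $U_0\times W_1$ and so is not recurrent for $f\times\sigma$, contradicting that $x$ is $\F_{ps}$-PR and $z$ is $\F_{ps}$-recurrent. Hence $x$ is distal.

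The engine is the preceding lemma, which manufactures an $\F_{ps}$-recurrent witness whose return set is confined to a prescribed set of the form \emph{dynamical syndetic} $\cap$ \emph{thick}; the point to get right in the present argument is the data feeding that lemma. Namely, one uses the \emph{second} minimal point $x'$ to supply the dynamical syndetic set $S_1$, while proximality, through the non-minimality of $(x',x)$, supplies a thick set of times on which $x$ avoids $U_0$. I expect the only delicate verifications to be that non-minimality of a nondiagonal proximal pair genuinely produces a neighbourhood with non-syndetic (hence thick-complement) return times, and that the neighbourhoods can be taken with disjoint closures while preserving non-syndeticity; both reduce to the elementary facts that a point is minimal iff all its return sets are syndetic and that any subset of a non-syndetic set is again non-syndetic.
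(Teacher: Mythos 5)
Your proof is correct and follows essentially the same route as the paper's: argue by contradiction from minimality of $x$, then feed a dynamical syndetic set defined by the proximal partner together with a thick set of times at which $x$ avoids a fixed neighbourhood of itself into Lemma~\ref{1202211736}, so that the resulting $\F_{ps}$-recurrent point $z$ makes the pair $(x,z)$ non-recurrent, contradicting $\F_{ps}$-PR. The only divergence is local and harmless: the paper builds its thick set directly from the proximality estimates ($d(f^i(x),f^i(y))<\eps/j$ on the blocks $[n_j,n_j+j)$), whereas you extract yours from the non-minimality of the off-diagonal proximal pair $(x',x)$ in the product system, and you re-derive (rather than cite \cite[Theorem~3.4]{Ye_WPR} for) the minimality of $x$; both mechanisms are valid.
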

\begin{proof}
First note that $x$ is minimal by \cite[Theorem~3.4]{Ye_WPR}. Let $X$ be the closure of the (positive) orbit of $x$ under the associated map $f$.
 Assume on the contrary that $x$ is not distal, which means that there is a minimal point $y\in X\setminus \set{x}$ such that
the pair $(x,y)$ is proximal. Let $\eps> 0$ and $U, V$ be two open sets such that $y\in U$, $x\not\in \overline{V}$ and $B (\overline{U}, \eps)\subset V$.

The pair $(x,y)$ is proximal, and so there is an increasing sequence $n_j$ such that
$n_{j+1}-n_j>j^2$ and $d(f^i(x),f^i(y))<\frac{\eps}{j}$ for every $i\in [n_j,n_j+j)$. We denote $S_1= N_f (y, U)$ and let $S_2= \bigcup_{j=1}^\infty [n_j, n_j+ j)\cap \N$.
Then $S_1$ is a dynamical syndetic set and $S_2$ is a thick set.

By Lemma~\ref{1202211736} the intersection $S_1\cap S_2$ contains a set $N_g (z, W)$, where $z$ is a transitive and $\F_{ps}$-recurrent point from some M-system $(Z, g)$, and $W$ is a neighborhood of $z$.
But $(x,z)\in (X\setminus \overline{V})\times W$ and for any $k>0$ if $g^k (z)\in W$ then $f^k (y)\in U$ and $d (f^k (x), f^k (y))< \eps$, which implies that $f^{k}(x)\in V$. This shows that $(x,z)$ is not recurrent, a contradiction to the assumption that $x$ is $\F_{ps}-PR$, which ends the proof.
\end{proof}

So far, we have obtained by Theorem~\ref{thm:fps} the equivalence of the properties $\F_{inf}-PR, \F_{pubd}-PR$ and $\F_{ps}-PR$. Combining this equivalence with recent results from \cite{LJFM}, we obtain the following characterization, which is an extension of the classical characterization of distality in \cite[Theorem 9.11]{FurBook}.
While it is not visible here,  the proof strongly relies on the structure of the set idempotent in
$\beta \N$.  For a more detailed exposition on this important topic see for example \cite{BerD, Ellisbook, Glasner, HS, LJFM}.

Fix a family $\F$ and any sequence $\set{x_n}_{n=1}^\infty\subset X$. We say that $z\in X$ is an \emph{$\F$-limit}
of $\set{x_n}_{n=1}^\infty$ if for every open neighborhood $U$ of $z$ the set $\set{n \in \N :
x_n \in U} \in \F$. Observe that if $\F$ is a filter then there exists at most one such a point $z$.
 It can be proved that every ultrafilter $p\in \beta \N$ has \emph{Ramsey Property}, that is, if
$A\cup B\in p$ then either $A\in p$ or $B\in p$. Therefore, it is not hard to show that for any sequence $\set{x_n}_{n=1}^\infty\subset X$ and any ultrafilter $p\in \beta \N$
there exists a unique point $z\in X$ such that $z$ is a $p$-limit of $\set{x_n}_{n=1}^\infty$. In what follows, for any $x\in X$ and any $p\in \beta \N$ we will denote by
$p x$ the $p$-limit of the sequence $\set{f^n(x)}_{n=1}^\infty$.
By the definition of the Stone-\v{C}ech compactification, any function $\N \ni n \mapsto x_n\in X$ can be extended to be a continuous function $\beta\N \ni p \mapsto x_p \in X$.
But it is also not hard to prove that for every $p\in \beta\N$ the point $x_p\in X$ is just the $p$-limit of the sequence $\set{x_n}_{n=1}^\infty$ (e.g. see \cite[Corollary~3.49.1]{HS}).

A family $\F$ is a \emph{filterdual} if its dual family $k\F$ is a filter, where $k\F= \{F\subset \N: \N\setminus F\not\in \F\}$.
It can be proved that both $\F_{pubd}$ and $\F_{ps}$ are filterduals (e.g. see \cite{Aki97}) and that both sets $h(\F_{pubd})$ and $h(\F_{ps})$ are
closed subsemigroups of $(\beta\N, +)$ where $h(\F)=\set{p\in \beta\N : p\subset \F}$ (e.g. see \cite{LJFM}). But then with respect to these families
we can apply the following equivalent characterization (it is a shortened version of a list of equivalent conditions in \cite[Theorem~4.4]{LJFM}):

\begin{thm}\label{JL:recurence}
Let $\F$ be a filterdual and suppose that $h(\F)$ is a subsemigroup of $(\beta \N, +)$.
Then $x$ is $\F$-recurrent if and only if there exists an idempotent $p\in h(\F)$ such that $px=x$.
\end{thm}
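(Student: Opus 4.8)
The plan is to treat the two implications separately. The reverse implication needs no real work: if $p\in h(\F)$ is an idempotent with $px=x$, then for any open neighborhood $U$ of $x$ we have $x=px\in U$, so by the definition of the $p$-limit the return-time set $N_f(x,U)$ lies in $p$; since $p\subset\F$ this forces $N_f(x,U)\in\F$, and as $U$ was arbitrary $x$ is $\F$-recurrent. (Idempotency plays no role here.)

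For the forward implication I would exhibit an idempotent directly. Set $A=\set{p\in\beta\N:px=x}$. Since $A=\bigcap_{U\ni x}\widehat{N_f(x,U)}$ is an intersection of clopen sets, it is closed; and using the standard relation $(p+q)x=p(qx)$ for the action of $(\beta\N,+)$ on $X$ (see \cite{HS}), if $px=x$ and $qx=x$ then $(p+q)x=p(qx)=px=x$, so $A$ is a subsemigroup. As $h(\F)$ is a closed subsemigroup by hypothesis and the right translations $\rho_q$ restrict to continuous maps, $A\cap h(\F)$ is---provided it is nonempty---a compact right topological semigroup, and the Ellis--Numakura lemma then yields an idempotent $p\in A\cap h(\F)$, which is exactly an idempotent of $h(\F)$ fixing $x$.

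The crux, and the step I expect to be the main obstacle, is the nonemptiness of $A\cap h(\F)$; this is where the filterdual hypothesis enters. First I would record that for a filterdual $\F$ one has $h(\F)=\set{p\in\beta\N: k\F\subseteq p}$, since for an ultrafilter $p\subset\F$ is equivalent to $\N\setminus F\in p$ for every $F\notin\F$, i.e.\ to $k\F\subseteq p$. Second, every member of $\F$ meets every member of $k\F$: were $F\cap N=\emptyset$ with $F\in k\F$ and $N\in\F$, then $N\subseteq\N\setminus F$ and upward heredity would give $\N\setminus F\in\F$, contradicting $F\in k\F$. Now $\F$-recurrence of $x$ gives $N_f(x,U)\in\F$ for every open $U\ni x$, and finite intersections of such sets again contain a member of $\F$, namely $N_f(x,U_1)\cap\cdots\cap N_f(x,U_m)\supseteq N_f(x,U_1\cap\cdots\cap U_m)$. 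Since $k\F$ is a filter whose members meet every set in $\F$, the collection $k\F\cup\set{N_f(x,U):U\ni x}$ has the finite intersection property; any ultrafilter $p$ extending it satisfies $k\F\subseteq p$, hence $p\in h(\F)$, and contains every $N_f(x,U)$, hence $px=x$. Thus $A\cap h(\F)\neq\emptyset$, completing the plan.
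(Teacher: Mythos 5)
Your proof is correct, but there is nothing in the paper to compare it with line by line: the paper does not prove Theorem~\ref{JL:recurence} at all, it simply quotes it as a shortened version of the list of equivalent conditions in \cite[Theorem~4.4]{LJFM}. So your argument genuinely adds something: it is the standard Ellis-type idempotent argument, made self-contained modulo well-known facts about $\beta\N$. Both directions are handled properly: the reverse implication is, as you say, immediate from $p\subset\F$ and the definition of the $p$-limit; for the forward implication you apply the Ellis--Numakura lemma to $A\cap h(\F)$, where $A=\set{p\in\beta\N : px=x}$, with each hypothesis entering exactly where it should --- the subsemigroup assumption on $h(\F)$ makes $A\cap h(\F)$ a compact right topological semigroup (the identity $(p+q)x=p(qx)$, which needs continuity of each $f^n$ to pass through the $q$-limit, is indeed in \cite{HS}), and the filterdual assumption is used precisely once, to make $k\F$ a filter and hence to give $k\F\cup\set{N_f(x,U):U\ni x}$ the finite intersection property, which yields nonemptiness. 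One small wording correction: the closedness of $h(\F)$ is not ``by hypothesis'' --- the hypothesis only says subsemigroup --- but it is automatic and you should state it as such, since $h(\F)=\set{p\in\beta\N : k\F\subseteq p}=\bigcap_{B\in k\F}\widehat{B}$ is an intersection of clopen sets. With that adjustment your proof is complete; what the paper's citation buys instead is access to the longer list of equivalences in \cite[Theorem~4.4]{LJFM}, of which the stated theorem is only a fragment, while your route buys independence from that reference at essentially no extra cost.
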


 Now we are ready to prove above announced extension of Furstenberg's theorem.

\begin{thm} \label{distal}
The following statements are equivalent:
\begin{enumerate}[(1)]

\item $x$ is distal, \label{first}

\item $(x, y)$ is recurrent for any recurrent point $y$ of a system $(Y, g)$, \label{second}

\item $(x, y)$ is $\F_{pubd}$-recurrent for any $\F_{pubd}$-recurrent point $y$ of a system $(Y, g)$, \label{third}

\item $(x, y)$ is $\F_{ps}$-recurrent for any $\F_{ps}$-recurrent point $y$ of a system $(Y, g)$, \label{forth}

\item $(x, y)$ is minimal for any minimal point $y$ of a system $(Y, g)$. \label{fifth}
\end{enumerate}
\end{thm}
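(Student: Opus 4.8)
The plan is to establish the scheme in which (1) implies each of (2)--(5) by one uniform argument in $\beta\N$, and each of (2)--(5) returns to (1) by reducing to results already available. Throughout I use that a distal point is minimal, so that $X=\overline{\orbp(x,f)}$ is a minimal system.

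For the forward direction I would first record the key fact that if $x$ is distal then $px=x$ for \emph{every} idempotent $p\in\beta\N$. Indeed, for an idempotent $p$ the pair $(x,px)$ is proximal: $p(x,px)=(px,px)\in\Delta_2(X)$, so $p\text{-}\lim d(f^n x,f^n(px))=0$, and since every idempotent is non-principal this forces $\inf_n d(f^nx,f^n(px))=0$; at the same time $px$ lies in $\overline{\orbp(x,f)}$ as a $p$-limit of $\{f^n(x)\}$, and distality of $x$ then gives $px=x$. Now fix a family $\F\in\{\F_{inf},\F_{pubd},\F_{ps}\}$; each is a filterdual with $h(\F)$ a subsemigroup of $(\beta\N,+)$ (for $\F_{inf}$ one has $h(\F_{inf})=\beta\N\setminus\N$). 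Given an $\F$-recurrent point $y$ in $(Y,g)$, Theorem~\ref{JL:recurence} yields an idempotent $p\in h(\F)$ with $py=y$. As $p$-limits are computed coordinatewise, $p(x,y)=(px,py)=(x,y)$, and the converse half of Theorem~\ref{JL:recurence}, applied to $(X\times Y,f\times g)$, shows that $(x,y)$ is $\F$-recurrent. This settles (1)$\Rightarrow$(2), (1)$\Rightarrow$(3) and (1)$\Rightarrow$(4) simultaneously. For (1)$\Rightarrow$(5) I replace Theorem~\ref{JL:recurence} by the classical fact that a minimal point $y$ is fixed by some minimal idempotent $p$; then $px=x$ gives $p(x,y)=(x,y)$ with $p$ minimal, whence $(x,y)$ is minimal.

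For the reverse implications I would use that each conclusion in (2)--(4) in particular makes $(x,y)$ recurrent, since $\F_{pubd}$- and $\F_{ps}$-recurrence imply recurrence. Hence (2), (3) and (4) assert respectively that $x$ is $\F_{inf}-PR$, $\F_{pubd}-PR$ and $\F_{ps}-PR$; along the (easy) chain $\F_{inf}-PR\Rightarrow\F_{pubd}-PR\Rightarrow\F_{ps}-PR$ each reduces to $\F_{ps}-PR$, and Theorem~\ref{thm:fps} then gives that $x$ is distal. For (5)$\Rightarrow$(1) I would argue directly. Testing (5) against a one-point system shows $x$ is minimal, so $X$ is minimal. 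If $x$ were not distal there would be $z\in X\setminus\{x\}$ with $(x,z)\in\Prox(f)$, and $z$ is automatically minimal as a point of the minimal system $X$; thus (5) forces $(x,z)$ to be minimal. It then remains to invoke the elementary lemma that a minimal proximal pair lies on the diagonal: picking $n_k$ with $d(f^{n_k}x,f^{n_k}z)\to0$ and passing to a convergent subsequence gives a limit $(w,w)\in\Delta_2(X)$ lying in the minimal set $\overline{\orbp((x,z),f\times f)}$; since $\Delta_2(X)$ is closed and invariant, this whole minimal set sits inside the diagonal, so $x=z$, a contradiction.

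I expect the forward direction to be the crux. The genuinely delicate points are verifying $px=x$ for distal $x$ (in particular noting that every idempotent is non-principal, so the diagonal $p$-limit truly yields proximality) and confirming that $\F_{inf},\F_{pubd},\F_{ps}$ satisfy the hypotheses of Theorem~\ref{JL:recurence} so that it applies verbatim to the product system and in both directions. By contrast the reverse direction is comparatively soft, resting only on Theorem~\ref{thm:fps} together with the diagonal lemma for (5).
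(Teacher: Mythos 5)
Your proof is correct, and on the theorem's substantive new content it coincides with the paper's own argument: the implications \eqref{third}$\Rightarrow$\eqref{first} and \eqref{forth}$\Rightarrow$\eqref{first} are obtained, exactly as in the paper, by passing down the chain $\F_{inf}$-PR $\Rightarrow$ $\F_{pubd}$-PR $\Rightarrow$ $\F_{ps}$-PR and invoking Theorem~\ref{thm:fps}, while \eqref{first}$\Rightarrow$\eqref{third},\eqref{forth} rests on the same two pillars (Theorem~\ref{JL:recurence} producing an idempotent $p\in h(\F)$ with $py=y$, and the fact that a distal point satisfies $px=x$ for every idempotent $p$). Where you genuinely diverge is in the classical equivalences: the paper disposes of \eqref{first}$\Leftrightarrow$\eqref{second}$\Leftrightarrow$\eqref{fifth} by citing \cite[Theorem 9.11]{FurBook} and of $px=x$ by citing \cite[Proposition~3.17]{Ber}, whereas you prove all of this inside the $\beta\N$ framework: the proximality computation $p(x,px)=(px,px)$ (with the correct observation that idempotents are non-principal, so that a genuine proximality sequence can be extracted and distality then forces $px=x$), the remark that $\F_{inf}$ is itself a filterdual with $h(\F_{inf})=\beta\N\setminus\N$ a subsemigroup so that Theorem~\ref{JL:recurence} covers \eqref{second} on the same footing as \eqref{third} and \eqref{forth}, minimal idempotents for \eqref{first}$\Rightarrow$\eqref{fifth}, and the elementary ``proximal plus minimal implies diagonal'' argument for \eqref{fifth}$\Rightarrow$\eqref{first}. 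A further cosmetic difference: to conclude that the pair is $\F$-recurrent the paper uses the filter property of the ultrafilter directly (the intersection $N_f(x,U)\cap N_g(y,V)$ lies in $p\subset\F$), while you re-invoke the ``if'' half of Theorem~\ref{JL:recurence} on the product system; these two steps are interchangeable. Your route buys uniformity and near self-containedness---all five conditions flow from the single principle that distal points are fixed by all idempotents, combined with idempotent characterizations of the various recurrence notions---at the price of importing one classical fact the paper never needs to state (a point is uniformly recurrent if and only if it is fixed by some minimal idempotent); the paper's proof is shorter because it outsources the Furstenberg equivalences wholesale.
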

\begin{proof}
The equivalence of
\eqref{first}, \eqref{second} and \eqref{fifth} comes from \cite[Theorem 9.11]{FurBook}.

Implications $\eqref{third}\Longrightarrow \eqref{first}$ and $\eqref{forth}\Longrightarrow \eqref{first}$ follow from the equivalence of the properties $\F_{inf}-PR, \F_{pubd}-PR$ and $\F_{ps}-PR$.

Now we prove the implication of $\eqref{first}\Longrightarrow \eqref{third}$.
Fix any $\F_{pubd}$-recurrent point $y$ of a system $(Y, g)$. Then by Theorem~\ref{JL:recurence} there exists an idempotent $p\in \beta \N$ such that $p\subset \F_{pubd}$ and $p y= y$.
But it is not hard to check, that if $p$ is an idempotent in $\beta \N$ and $x$ is distal then $px=x$ (e.g. see \cite[Proposition~3.17]{Ber}).
Since $p$ is a filter, for any neighborhood $U$ of $x$ and $V$ of $y$ we have that $\set{n\in \N : f^n(x)\in U}\cap \set{m\in \N : g^m(y)\in V}\in p \subset \F_{pubd}$, which implies that $(x, y)$ is $\F_{pubd}$-recurrent.

The proof of implication $\eqref{first}\Longrightarrow \eqref{forth}$ is identical to that of $\eqref{first}\Longrightarrow \eqref{third}$, so we leave it to the reader.
\end{proof}


Denote by \emph{$\F$-PR$_0$} the restriction in the definition of $\F$-PR considering recurrence in pair only with points $y$ from dynamical systems $(Y,g)$ with zero topological entropy.
Generally speaking, \emph{$\F$-PR$_0$} denotes product recurrence with respect to systems of zero entropy.

Observe that in the proof of Lemma~\ref{1202211736} the system generated by $z$ can have positive topological entropy. For example, if $x$ is from a minimal subshift $X$ over $\set{0,1}$ then it may happen that in the first step of the induction we will incorporate in $z$ arbitrarily long subwords of $x$, and therefore entropy of the subshift generated by $z$ will be at least as that of $X$. In fact, this point is not surprising, since it was shown in \cite{Ye_WPR} that there exists minimal systems with positive entropy (therefore not distal) such that every point in this system is $\F_{ps}$-PR$_0$ (an important aid when dealing with $\F$-PR$_0$ property, especially for construction of counterexamples, is provided by results of \cite{HPY} on disjointness with zero entropy systems).
In particular, another question from  \cite{Ye_WPR} about implication
$$
\quad \F_{ps}-PR_0 \quad \Longrightarrow \F_{pubd}-PR_0
$$
remains still open.

\section{Weak mixing and synchronization}

In this section we investigate properties of the set of transfer times of points in weakly mixing sets.
One of possible applications of our analysis is
another proof of an important result by Furstenberg \cite{FurDA} stating that every weakly mixing system is disjoint from all minimal distal systems.
As another application we show that a weakly mixing set with dense distal points contains
a residual subset of $\F_{s}-PR$ points which are not distal, which completes results of \cite{PO_AIF}.

 For this purpose, given sets $U, V\subset X$ we denote $C_f(U,V) = \set{n\in \N: f^n(U)\subset V}$.
Additionally, for any sequence of positive integers $p_1,p_2,\cdots$ and $n\geq 1$, denote by $S(p_1,\cdots,p_n)$ the set of all sums of subsequences of $p_1,\cdots, p_n$, that is,
$$
S(p_1,\cdots,p_n)=\set{p_{i_1}+\cdots +p_{i_k}\; : \; 1\leq k\leq n,\; i_1<\cdots <i_k}.
$$
Just by the definition, the set $S(p_1,p_2,\cdots)= \bigcup_{n=1}^\infty S(p_1,\cdots, p_n)$ is an IP-set.

The following fact has a straightforward proof.

\begin{lem}\label{lem:transitions}
If for some open sets $U,V$ and a point $x\in X$ we have $J\subset C_f(U,V)$ and $l\in N_f(x,U)$
then $l+s\in N_f(x,V)$ for every $s\in J$.
\end{lem}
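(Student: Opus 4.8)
The plan is to simply unwind the two definitions and invoke the semigroup property $f^{s}\circ f^{l}=f^{s+l}$ of the iterates. Fix an arbitrary $s\in J$ (if $J=\emptyset$ the conclusion is vacuous). Since $J\subset C_f(U,V)$ and $C_f(U,V)=\set{n\in\N: f^n(U)\subset V}$ by definition, we immediately get $f^{s}(U)\subset V$. On the other hand, the hypothesis $l\in N_f(x,U)$ is, by the definition $N_f(x,U)=\set{n\in\N: f^n(x)\in U}$, nothing but the statement $f^{l}(x)\in U$.

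It then remains to chase the point $f^{l}(x)$ through $f^{s}$. Applying $f^{s}$ to $f^{l}(x)\in U$ and using the inclusion $f^{s}(U)\subset V$ yields $f^{s}(f^{l}(x))\in V$. Since $f^{s}(f^{l}(x))=f^{s+l}(x)=f^{l+s}(x)$, this reads $f^{l+s}(x)\in V$, which is exactly $l+s\in N_f(x,V)$. As $s\in J$ was arbitrary, the conclusion holds for every $s\in J$. I do not expect any genuine obstacle here, as the author already flags the statement as straightforward; the only point requiring minor care is tracking the composition order in $f^{s}\circ f^{l}$, and the entire argument is really just the chain of inclusions $f^{l+s}(x)=f^{s}(f^{l}(x))\in f^{s}(U)\subset V$.
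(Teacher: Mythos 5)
Your proof is correct and is exactly the straightforward definition-unwinding argument the paper has in mind (the paper states the lemma without proof, calling it straightforward). The key chain $f^{l+s}(x)=f^{s}(f^{l}(x))\in f^{s}(U)\subset V$ is precisely what is needed, and your handling of the composition order is fine.
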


Before proceeding, we also need the following technical lemma.

\begin{lem}\label{lem:ext_sum}
Assume that $A$ is a weakly mixing set of order $2$ for $(X,f)$. Let $U,V$ be open sets intersecting $A$ and $x\in U\cap A$, $y\in V\cap A$. If there are $p_1,\cdots, p_n\in \N$ satisfying
$$
S(p_1,\cdots, p_n)\subset N_f(x,V)\cap N_f(y,V)
$$
then there is an integer $p_{n+1}> \sum_{j=1}^n p_j$ and open sets $U',V'$ intersecting $A$ such that
$$
\overline{U'}\subset U, \overline{V'}\subset V\ \text{and}\ S(p_1,\cdots, p_n,p_{n+1})\subset C_f(U',V)\cap C_f(V',V).
$$
\end{lem}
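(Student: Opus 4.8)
The plan is to realize separately the three ``layers'' into which the target set splits. Writing $M=\sum_{j=1}^n p_j$ and using the decomposition
$$
S(p_1,\cdots,p_n,p_{n+1})=S(p_1,\cdots,p_n)\;\cup\;\set{p_{n+1}}\;\cup\;\set{p_{n+1}+t\; :\; t\in S(p_1,\cdots,p_n)},
$$
I would first shrink the source neighbourhoods of $x$ and $y$ so that the ``old'' sums already behave well on them, then apply weak mixing of order $2$ once to produce $p_{n+1}$ together with the final sets $U',V'$, and finally verify the three layers using the composition identity $f^{p_{n+1}+t}=f^{t}\circ f^{p_{n+1}}$.

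First I would use continuity to choose open sets $U_0\ni x$ and $V_0\ni y$ with $\overline{U_0}\subset U$, $\overline{V_0}\subset V$ and
$$
f^s(U_0)\subset V\quad\text{and}\quad f^s(V_0)\subset V\qquad\text{for every } s\in S(p_1,\cdots,p_n).
$$
This is possible because $f^s(x)\in V$ and $f^s(y)\in V$ for every $s$ in the \emph{finite} set $S(p_1,\cdots,p_n)$, so only finitely many open conditions must be met, and normality of the compact metric space $X$ lets us also arrange the closure inclusions. Both $U_0$ and $V_0$ meet $A$ since they contain $x,y\in A$. Note that $V_0\subset V$, so $V_0$ itself serves as a target neighbourhood of $y$ with $f^t(V_0)\subset V$ for all $t\in S(p_1,\cdots,p_n)\cup\set{0}$.

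Next I would invoke weak mixing of order $2$ with the two sources $U_0$ and $V_0$ and the common target $V_0$ (all meeting $A$): this yields some $k>0$ and points $a_1\in U_0\cap A$, $a_2\in V_0\cap A$ with $f^k(a_1)\in V_0$ and $f^k(a_2)\in V_0$. Put $p_{n+1}=k$. Using continuity once more, pick open sets $U'\ni a_1$, $V'\ni a_2$ with $U'\subset U_0$, $V'\subset V_0$ and $f^k(U')\subset V_0$, $f^k(V')\subset V_0$; then $\overline{U'}\subset\overline{U_0}\subset U$, $\overline{V'}\subset\overline{V_0}\subset V$, and both meet $A$. The three layers now check out: for $s\in S(p_1,\cdots,p_n)$ one has $f^s(U')\subset f^s(U_0)\subset V$ and $f^s(V')\subset f^s(V_0)\subset V$; for $s=p_{n+1}$ one has $f^{k}(U')\subset V_0\subset V$ and $f^{k}(V')\subset V_0\subset V$; and for $s=p_{n+1}+t$ with $t\in S(p_1,\cdots,p_n)$,
$$
f^{p_{n+1}+t}(U')=f^t\big(f^k(U')\big)\subset f^t(V_0)\subset V,
$$
and likewise $f^{p_{n+1}+t}(V')\subset f^t(V_0)\subset V$. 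Hence $S(p_1,\cdots,p_n,p_{n+1})\subset C_f(U',V)\cap C_f(V',V)$, as required.

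The delicate point — and the step I expect to be the main obstacle — is guaranteeing $p_{n+1}=k>M$. The bare definition of weak mixing of order $2$ only furnishes \emph{some} $k>0$, and a purely logical iteration of it does not by itself force large transition times: since $A$ need not be invariant, one cannot relay a transition through $A$. To settle this I would use that for a weakly mixing set of order $2$, which is in particular perfect, the joint transition times $\set{k>0:\ f^k(U_0\cap A)\cap V_0\neq\emptyset \text{ and } f^k(V_0\cap A)\cap V_0\neq\emptyset}$ form an infinite, hence unbounded, set, a structural property of weakly mixing sets (cf. \cite{PZ,PZpreprint}); choosing $k$ in this set with $k>M$ finishes the proof. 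Conceptually, the composition identity $f^{p_{n+1}+t}=f^{t}\circ f^{k}$, combined with the fact that $f^k$ steers $U'$ and $V'$ \emph{into} the prepared set $V_0$, is exactly what lets the new top layer of sums inherit the required inclusions once the large-$k$ issue is resolved.
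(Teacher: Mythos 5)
Your proof is correct and takes essentially the same route as the paper's: continuity-shrinking of neighbourhoods of $x$ and $y$ so the old sums carry them into $V$, a single application of weak mixing of order $2$ with both targets inside $V$, and propagation of the inclusions by the composition $f^{p_{n+1}+t}=f^{t}\circ f^{p_{n+1}}$, which is exactly the content of the paper's Lemma~\ref{lem:transitions}. As for the point you flag as delicate, the paper handles it precisely as you do, by asserting outright that weak mixing of order $2$ supplies some $l>\sum_{j=1}^{n}p_j$; in fact your appeal to \cite{PZ, PZpreprint} can be avoided, because unboundedness of the joint transition times follows by iterating your own relay: having found $k_1$ with witnesses $a_1\in U_0\cap A$, $a_2\in V_0\cap A$, shrink to open sets $P_1\subset U_0$, $P_2\subset V_0$ around them with $f^{k_1}(P_1)\subset V_0$ and $f^{k_1}(P_2)\subset V_0$, then apply weak mixing of order $2$ once more with sources $U_0,V_0$ and targets $P_1,P_2$ to get $m>0$ and points of $A$ in $U_0,V_0$ landing in $P_1,P_2$ at time $m$, hence in $V_0$ at time $m+k_1>k_1$ --- the relay passes through the open sets $P_i$ rather than through $A$, so the non-invariance of $A$ that worried you is harmless.
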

\begin{proof}
By the assumptions, there are neighborhoods $W\subset V,W'\subset U$ of $y$ and $x$ respectively, such that $S(p_1,\cdots, p_n)\subset C_f(W',V)\cap C_f(W,V)$.
Since $A$ is weakly mixing of order 2, there are $l>\sum_{j=1}^n p_j$ and $x'\in A\cap W'$, $y'\in A\cap W$, such that
$$
l\in N_f(x', W)\cap N_f(y', W).
$$
But then, by Lemma~\ref{lem:transitions},  $l+s\in N_f(x',V)\cap N_f(y',V)$ for all $s\in S(p_1,\cdots, p_n)$. Putting $p_{n+1}=l$
we obtain that
$$
S(p_1,\cdots, p_{n+1})\subset N_f(x', V)\cap N_f(y', V).
$$
Now, it is enough to choose sufficiently small neighborhoods $U',V'$ of $x'$, $y'$ respectively and the proof is completed.
\end{proof}

The following fact is the main result of this section. It provides a characterization of transfer times of points in weakly mixing sets.

\begin{thm}\label{thm:TranIP}
Let $A$ be a weakly mixing set of order $2$ for $(X,f)$ and $U$ an open set intersecting $A$. Then there is $x\in U\cap A$ such that for every open set $V$ intersecting $A$
the set $N_f(x, V)$ contains an IP-set. In particular, for every open set $V$ intersecting $A$
the set $N_f(U\cap A, V)$ contains an IP-set.
\end{thm}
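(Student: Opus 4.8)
The plan is to construct $x$ as the unique point of a nested intersection of open sets and, for every target set drawn from a countable base, to grow an IP-set inside $N_f(x,\cdot)$ by iterating Lemma~\ref{lem:ext_sum}, all the targets being handled simultaneously through a diagonal schedule. First I would reduce to countably many targets. As $X$ is compact metric it is second countable; let $\set{V_k}_{k\geq 1}$ enumerate the basic open sets meeting $A$. It suffices to find a single $x\in U\cap A$ with $N_f(x,V_k)$ containing an IP-set for every $k$: for an arbitrary open $V$ meeting $A$ choose $a\in V\cap A$ and a basic set $V_k$ with $a\in V_k\subset V$ (it meets $A$ since $a\in A$), so that the IP-set inside $N_f(x,V_k)\subset N_f(x,V)$ does the job. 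The ``in particular'' assertion is then immediate, because $x\in U\cap A$ forces $N_f(x,V)\subset N_f(U\cap A,V)$.

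The engine is that Lemma~\ref{lem:ext_sum} iterates with a \emph{fixed} target. Feeding the lemma's output sets back as its next input (and picking auxiliary points $x'\in U'\cap A$, $y'\in V'\cap A$, for which $C_f(U',V)\subset N_f(x',V)$ and $C_f(V',V)\subset N_f(y',V)$ re-establish its hypothesis) produces, for a fixed $V$, integers $p_1<p_2<\cdots$ with $p_{n+1}>\sum_{j\leq n}p_j$ and nested open sets $U=U_0\supset \overline{U_1}\supset U_1\supset\cdots$ meeting $A$ such that $S(p_1,\dots,p_n)\subset C_f(U_n,V)$ for each $n$. The crucial monotonicity is that $U'\subset U$ implies $C_f(U,V)\subset C_f(U',V)$, since $f^m(U')\subset f^m(U)$; thus shrinking the ``$x$-side'' set never destroys an already-secured relation $S(\dots)\subset C_f(\cdot,V)$, and a single nested sequence of $x$-side sets can serve all targets at once.

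Concretely, I would fix a schedule $k\colon\N\to\N$ hitting every value infinitely often, maintain one nested sequence $U_0\supset\overline{U_1}\supset\cdots$ of sets meeting $A$ with $\diam U_n\to 0$, and for each $k$ keep a finite IP-generator $p^{(k)}_1,\dots,p^{(k)}_{m_k}$ together with a helper set $W^{(k)}\subset V_k$ meeting $A$ satisfying $S(p^{(k)}_1,\dots,p^{(k)}_{m_k})\subset C_f(U_n,V_k)\cap C_f(W^{(k)},V_k)$. At step $n$ I apply Lemma~\ref{lem:ext_sum} to the target $V_{k(n)}$ (with $U=U_n$, the helper $W^{(k(n))}$ supplying $y'$, and a point of $U_n\cap A$ supplying $x'$); this appends one term to the generator of $V_{k(n)}$, replaces $U_n$ by a smaller $U_{n+1}$ with $\overline{U_{n+1}}\subset U_n$, and updates $W^{(k(n))}$, after which I shrink $U_{n+1}$ further to force $\diam U_{n+1}\leq 1/(n+1)$ while keeping $U_{n+1}\cap A\neq\emptyset$. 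By the monotonicity above, this shrinking preserves the $U$-side relation $S(\dots)\subset C_f(U_n,V_k)$ of \emph{every} target, while helper sets of the untouched targets are left intact; hence all invariants persist.

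Setting $x=\bigcap_n\overline{U_n}$ (a single point, with $x\in U_n$ for all $n$ because $\overline{U_{n+1}}\subset U_n$, and $x\in U\cap A$ since $A$ is closed and each $U_n$ meets $A$), the schedule gives, for each $k$, an infinite generator $p^{(k)}_1,p^{(k)}_2,\dots$; whenever $s\in S(p^{(k)}_1,\dots,p^{(k)}_m)$ we have $s\in C_f(U_n,V_k)$ at the stage $n$ where $p^{(k)}_m$ was created, so $f^s(x)\in V_k$ because $x\in U_n$. Thus $S(p^{(k)}_1,p^{(k)}_2,\dots)$, an IP-set, lies in $N_f(x,V_k)$, as required. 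The main obstacle is precisely the bookkeeping of this diagonalization: one must be sure that advancing one target (which shrinks the shared $x$-side set) cannot undo the IP-relations already secured for the other targets, and this is exactly what the monotonicity $C_f(U,V)\subset C_f(U',V)$ for $U'\subset U$ guarantees.
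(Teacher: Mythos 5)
Your proposal is correct and follows essentially the same route as the paper's own proof: the paper also reduces to a countable family of targets (balls around a dense sequence in $A$), runs a diagonal schedule (its order $\prec$ on pairs $(i,j)$), keeps one nested shrinking sequence of $x$-side sets $U^{(i,j)}$ together with per-target helper sets $V_i^{(j)}$ and generators $p_i^{(j)}$ grown by Lemma~\ref{lem:ext_sum}, and takes $x$ in the intersection, using exactly the monotonicity $C_f(U,V)\subset C_f(U',V)$ for $U'\subset U$ that you isolate. The only cosmetic difference is that the paper treats the first visit to each target ($j=1$) as a separate base case via weak mixing of order $2$ rather than invoking the lemma with an empty generator, which your scheme would also need to do at initialization.
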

\begin{proof}
We choose a set $\set{x_i}_{i=1}^\infty \subset A$ dense in $A$. It suffices to prove that there is $x\in U\cap A$ such that for every $i$ and every $n>0$
the set $N_f(x, B(x_i,\frac{1}{n}))$ contains an IP-set.

Arrange sets $\set{B(x_i,\frac{1}{n})}_{i,n=1}^\infty$ into a sequence, let say $V_1,V_2,\cdots$ and order pairs $(i,j)$ by taking consecutive diagonals $\set{(i,j) : i+j=n}$ in $\N\times \N$, say
$$
(1,1)\prec (1,2)\prec (2,1)\prec (1,3)\prec (2,2)\prec (3,1)\prec (1,4)\prec \cdots.
$$
For technical reasons, we assume that $(0,0)\prec (1,1)$ and put as $U^{(0,0)}$ an open set intersecting $A$ such that $\overline{U^{(0,0)}}\subset U$ and diameter of $U^{(0,0)}$ is at most $1$. By the same reason we put $p^{(0)}_0=0$ and $V_i^{(0)}= V_i$ for each $i\in \N$.
We will perform an inductive construction with respect to the relation $\prec$ to construct:
\begin{enumerate}[(i)]
\item
a sequence of sets $U^{(i,j)}$ intersecting $A$ such that the diameter of $U^{(i,j)}$ is at most $\frac{1}{i+ j+ 1}$ and $\overline{U^{(i,j)}}\subset U^{(a,b)}$ if $(a,b)\prec (i,j)$;
\item for every $j=1,2,\cdots$ a sequence of sets $V^{(j)}_i$ intersecting $A$ such that $\overline{V^{(j)}_i}\subset V^{(j- 1)}_i$ (recall that $V_i^{(0)}= V_i$); and
\item a sequence of integers $p^{(j)}_i$ such that
$S(p^{(1)}_i,\cdots, p_i^{(j)})$ is a subset of $C_f(U^{(i,j)},V_i)$ $\cap C_f(V_i^{(j)},V_i)$ and $p^{(j)}_{i}> p^{(j- 1)}_{i}$ (recall that $p^{(0)}_0=0$).
\end{enumerate}

Consider a pair $(i,j)$ and assume that for the pair $(i',j')$ directly proceeding $(i,j)$ with respect to the relation $\prec$ we have already constructed a set $U^{(i',j')}$ such that $U^{(i',j')}\cap A\neq \emptyset$ and $\overline{U^{(i',j')}}\subset U^{(a,b)}$ for every $(a,b) \prec (i',j')$. We also assume that numbers $p^{(i_1)}_{j_1}$ have already been constructed for all $(i_1,j_1)\prec (i,j)$.

If $j=1$, then since $A$ is weakly mixing of order 2, we can find $x\in U^{(i',j')}\cap A$ and $y\in V_i\cap A$ such that there is $l\in N_f(x,V_i)\cap N_f(y,V_i)$.
Then, there are open sets $U^{(i,j)}\ni x$ and $V_i^{(1)}\ni y$ such that diameter of $U^{(i,j)}$ is at most $\frac{1}{i+ j+ 1}$ and $l\in C_f(U^{(i,j)},V_i)\cap C_f(V_i^{(1)},V_i), \overline{V^{(1)}_i}\subset V_i, \overline{U^{(i,j)}}\subset U^{(i',j')}$. We put $p_1^{(i)}=l$.

If $j>1$ then we have already constructed positive integers $p_i^{(1)}<p_i^{(2)}<\cdots < p_{i}^{(j-1)}$ and sets $V_i^{(1)}, \cdots, V_i^{(j-1)}$ intersecting $A$ such that $S(p_i^{(1)},\cdots, p_{i}^{(j-1)})\subset C_f(U^{(i,j-1)},V_i)\cap C_f(V_i^{(j-1)},V_i)$ and $\overline{ V_i^{(k)}}\subset V_i^{(k-1)}$ for all $k=1,\cdots, j-1$. Fix any $x\in U^{(i',j')}\cap A$ and $y\in V_i^{(j-1)}\cap A$. Applying Lemma~\ref{lem:ext_sum} we obtain an integer $p_i^{(j)}> \sum_{k=1}^{j-1}p_i^{(k)}$ and open sets
$\overline{U^{(i,j)}}\subset U^{(i',j')}$, $\overline{V_i^{(j)}}\subset V_i^{(j-1)}$, both intersecting $A$, such that diameter of $U^{(i,j)}$ is at most $\frac{1}{i+ j+ 1}$ and
$$
S(p^{(1)}_i,\cdots, p_i^{(j)})\subset C_f(U^{(i,j)},V_i)\cap C_f(V_i^{(j)},V_i).
$$

By the above inductive construction, we obtain a nested sequence of sets $U^{(i,j)}$ with respect to the relation $\prec$.
Now fix any
$$
z\in \bigcap_{i,j}\overline{U^{(i,j)}}\subset U.
$$
Note that each $U^{(i,j)}$ intersects the closed set $A$ and the diameter of $U^{(i,j)}$ is at most $\frac{1}{i+ j+ 1}$, so directly from the construction
we have that $z\in U\cap A$ and
$$
S(p^{(1)}_i,\cdots, p_i^{(j)})\subset N_f(z,V_i)
$$
 for any pair $(i, j)$. Therefore $N_f(z,V_i)$ contains the IP-set generated by the sequence $\set{p_i^{(j)}}_{j=1}^\infty$. But $i\in \N$ is arbitrary, and so the proof is completed.
\end{proof}

Now we are going to present two possible applications of the somewhat technical results in Theorem \ref{thm:TranIP}.

The following theorem was first proved by Furstenberg \cite[Theorem~II.3]{FurDA}. The main argument in his proof is an algebraic description of the structure of minimal distal systems \cite[Proposition II.9]{FurDA} which is an advanced result with a complicated proof (see also \cite{STDF}) and the fact that
the Cartesian product of a weakly mixing system with any minimal system is transitive \cite[Proposition II.11]{FurDA}. Here we offer another proof of Furstenberg's theorem using tool provided by Theorem \ref{thm:TranIP}. In particular our proof relies on dynamical properties of distal points rather than algebraic properties of distal minimal systems.
One of important tools in our proof (which is somehow hidden behind results of \cite[Theorem 9.11]{FurBook}) is Hindman's theorem.
While we definitely do not refer to algebraic classification of distal minimal systems, it is not completely honest to say that arguments in our proof are elementary.

\begin{cor}\label{cor:Fur}
If $(X,f)$ is weakly mixing then it is disjoint from any minimal distal system.
\end{cor}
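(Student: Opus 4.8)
The plan is to fix an arbitrary joining $J\subset X\times Y$ of $(X,f)$ with a minimal distal system $(Y,g)$ and to prove directly that $J=X\times Y$; since $J$ was arbitrary this gives $\disj{f}{g}$. The two ingredients I would combine are: (i) Theorem~\ref{thm:TranIP} applied to the weakly mixing set $A=X$ and to the open set $U=X$, which produces a \emph{single} point $x^\ast\in X$ such that $N_f(x^\ast,V)$ contains an IP-set for \emph{every} nonempty open $V\subset X$; and (ii) the fact already used in the proof of Theorem~\ref{distal} (see \cite[Proposition~3.17]{Ber}) that an idempotent $p\in\beta\N$ fixes every distal point, so that $py=y$ for \emph{every} $y\in Y$, because $(Y,g)$ is minimal distal.

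First I would record that $J$, being closed and $f\times g$-invariant, is closed under the $\beta\N$-action: for $(u,v)\in J$ and any $p\in\beta\N$, the $p$-limit of the orbit of $(u,v)$, namely the point $(pu,pv)$, again lies in $J$. Since $J$ projects onto $X$, I choose $y^\ast$ with $(x^\ast,y^\ast)\in J$. For any idempotent $p$ we have $py^\ast=y^\ast$ by ingredient (ii), hence $(px^\ast,y^\ast)=(px^\ast,py^\ast)\in J$. Thus $E\times\set{y^\ast}\subset J$, where $E=\set{px^\ast : p\in\beta\N \text{ an idempotent}}$.

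The heart of the argument is to show that $E$ is dense in $X$. Given a nonempty open $V$, ingredient (i) furnishes an IP-set $F\subset N_f(x^\ast,V)$; by Hindman's theorem $F$ belongs to some idempotent ultrafilter $p$, so $N_f(x^\ast,V)\in p$, and therefore $px^\ast\in\overline{V}$. As $V$ is arbitrary, $E$ meets every nonempty open set, i.e. $\overline{E}=X$. Since $J$ is closed, it follows that $X\times\set{y^\ast}=\overline{E}\times\set{y^\ast}\subset J$.

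Finally I would inflate this single fibre to the whole product. Applying $f\times g$ and using that weak mixing forces transitivity and hence $f(X)=X$, we get $X\times\set{g^n y^\ast}\subset J$ for every $n$; since $(Y,g)$ is minimal the orbit $\set{g^n y^\ast : n\in\N}$ is dense in $Y$, and closedness of $J$ yields $X\times Y\subset J$, as required. The main obstacle is exactly the density of $E$, and it explains the strength demanded of Theorem~\ref{thm:TranIP}: a naive attempt to synchronize return times fails, because the hitting-time sets $N_g(y^\ast,W)$ in a minimal system are in general only syndetic and need not meet every IP-set, unless $W$ is a neighbourhood of $y^\ast$ — and arranging that conflicts with keeping the first coordinate at a good point. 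Routing everything through the idempotent action on $J$, so that distality collapses the $Y$-coordinate while Theorem~\ref{thm:TranIP} together with Hindman's theorem spreads the $X$-coordinate over a dense set, is what circumvents the difficulty and, notably, bypasses Furstenberg's structure theorem for distal minimal systems.
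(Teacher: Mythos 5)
Your proof is correct, and it shares the paper's skeleton: apply Theorem~\ref{thm:TranIP} with $A=U=X$ to produce $x^\ast$, choose $y^\ast$ with $(x^\ast,y^\ast)\in J$, establish the fibre inclusion $X\times\set{y^\ast}\subset J$, and then spread it over $Y$ using surjectivity of $f$, minimality of $(Y,g)$ and closedness of $J$. Where you diverge is in how the fibre inclusion is obtained. The paper synchronizes return times directly: by the IP-set characterization of distal points (\cite[Theorem 9.11]{FurBook}), $N_g(y^\ast,W)$ meets every IP-set whenever $W$ is a neighbourhood of $y^\ast$, hence $N_f(x^\ast,V)\cap N_g(y^\ast,W)\neq\emptyset$ for every open $V\subset X$, and a nested-neighbourhood limit argument inside the closed invariant set $J$ yields $X\times\set{y^\ast}\subset J$. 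You instead run the idempotent machinery explicitly: $J$ is stable under $p$-limits, idempotents fix distal points (the same \cite[Proposition~3.17]{Ber} fact the paper invokes when proving Theorem~\ref{distal}), and every IP-set belongs to some idempotent ultrafilter, so $E=\set{px^\ast : p+p=p}$ is dense and $E\times\set{y^\ast}\subset J$. The two middle steps are equivalent in content --- the characterization the paper cites is proved from precisely your two ingredients --- so nothing is gained in generality; what your version buys is that the role of idempotents, which the paper deliberately hides behind the citation of Furstenberg's theorem, becomes visible, at the cost of invoking the $\beta\N$-formalism. Two small repairs: your density argument gives $px^\ast\in\overline{V}$ rather than $px^\ast\in V$, so one should first shrink $V$ to $V'$ with $\overline{V'}\subset V$; and Theorem~\ref{thm:TranIP} requires a weakly mixing set, hence at least two points, so the singleton case should be disposed of separately, as the paper does.

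One claim in your closing commentary is wrong and worth correcting: direct synchronization of return times does not fail --- it is exactly what the paper does. To prove $X\times\set{y^\ast}\subset J$ one only needs the orbit of $(x^\ast,y^\ast)$ to enter $V\times W$ for arbitrary nonempty open $V\subset X$ and for $W$ ranging over neighbourhoods of $y^\ast$; since the second coordinate is supposed to return to $y^\ast$, requiring $W$ to be a neighbourhood of $y^\ast$ is not a restriction but the point, and $V$ is never required to be a neighbourhood of $x^\ast$, so there is no conflict with ``keeping the first coordinate at a good point''. Theorem~\ref{thm:TranIP} supplies an IP-set in $N_f(x^\ast,V)$ for every such $V$, distality of $y^\ast$ makes $N_g(y^\ast,W)$ meet it, and the limit argument finishes.
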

\begin{proof}
If $X$ is a singleton then theorem holds. Therefore we can assume that $X$ has at least two points, and so $X$ is perfect.
Fix any minimal distal system $(Y,g)$ and let $J\subset X\times Y$ be a joining of $(X,f)$ and $(Y,g)$.
Let $x\in X$ be a point obtained by applying Theorem~\ref{thm:TranIP} with $A= U=X$. Since $J$ is a joining, there exists $y\in Y$ such that $(x,y)\in J$. It can be proved that a point $y$ is distal if and only if for each neighborhood $W$ of $y$, the set $N_g(y,W)$ has a non-empty intersection with any IP-set (mainly, because that each IP-set contains a set of return times of some recurrent point to its sufficiently small neighborhood), e.g. see \cite[Theorem 9.11]{FurBook}. It immediately implies that $N_f(x,V)\cap N_g(y,W)\neq \emptyset$ for any non-empty open set $V\subset X$ and any open neighborhood $W$ of $y$.
Since $V$ is arbitrary, taking a nested sequence of neighborhoods of $y$ and using the fact that $J$ is a closed set containing positive limit set of the pair $(x,y)$ under $f\times g$, we obtain the inclusion $X\times \set{y}\subset J$. But $y$ has a dense orbit in $Y$, hence we get $X\times Y\subset J$ which ends the proof.
%
\end{proof}

In \cite{BHM} the authors introduced scattering systems using topological complexity of open covers and proved that scattering systems are disjoint from all minimal distal systems \cite[Proposition 4.2]{BHM}. The proof of \cite[Proposition 4.2]{BHM} relies again on the algebraic description from \cite[Proposition II.9]{FurDA}. Each weakly mixing system is scattering \cite[Proposition 3.4]{BHM} and for a minimal system scattering and weak mixing are equivalent properties \cite[Proposition 3.8]{BHM}. Unfortunately, we do not know if our proof
can be adopted to prove \cite[Proposition 4.2]{BHM}, that is, we do not know how to extend our result to work also for non-minimal scattering systems.

The following property was introduced in \cite{PO_AIF}. A non-empty set $A\subset X$ has the \emph{property \textbf{(P)}} if for any
open set $U\subset X$ with $U\cap A\neq \emptyset$ there exists a point $x\in U\cap A$ and an integer $K>0$ such
that
$f^{nK}(x)\in U$ for all $n\in \N$. In other words, if we denote by $\F_{s, +}$ the family generated by $\{k \N: k\in \N\}$, then the subset $\emptyset\neq A\subset X$ has the property \textbf{(P)} then $N_f (U\cap A, U) \in \F_{s, +}$ for any open set $U\subset X$ with $U\cap A\neq \emptyset$.

In \cite{Ye_WPR, PO_AIF} it was proved that each weakly mixing system with dense distal points is disjoint from all minimal systems and all points with a dense orbit in such a system are weakly product recurrent. Another result of \cite{PO_AIF} shows that each weakly mixing set with the property \textbf{(P)} contains a residual subset of weakly product recurrent points. It is also possible to prove that (see \cite{Huang}) every weakly mixing system with the property \textbf{(P)} is disjoint from all minimal systems. Then, a natural claim is that every weakly mixing set (of order $2$) with dense distal points should contain weakly product recurrent points. However, it is much harder to synchronize with distal points than with uniformly recurrent points, and because of this difficulty the above claim was left open in \cite{PO_AIF}. Now, Theorem~\ref{thm:TranIP} provides a tool which can be used to finally prove the above mentioned fact, completing the results of \cite{PO_AIF}.

\begin{cor}\label{cor:distal_wpr}
Let $A\subset X$ be a 
weakly mixing set of order $2$ for $(X,f)$. If additionally distal points are dense in $A$ then $A$ contains a residual subset of points which are weakly product recurrent but not product recurrent.
\end{cor}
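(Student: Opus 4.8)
The plan is to obtain the required residual set as the intersection of two residual subsets of $A$: a residual set of weakly product recurrent points, and a residual set of non-distal points. Since $A$ is a weakly mixing set of order $2$ it is perfect, so $A$ (with the induced metric) is a compact metric space without isolated points and the Baire category theorem applies. Throughout I use the equivalences collected in Theorem~\ref{distal}: a point is product recurrent exactly when it is distal, so the phrase ``weakly product recurrent but not product recurrent'' is the same as ``weakly product recurrent but not distal''.

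First I would isolate the points supplied by Theorem~\ref{thm:TranIP}. Fix a countable base $\set{V_i}$ of open sets meeting $A$. For each $i$ and each $m\in\N$ let $G_{i,m}$ be the set of $x\in A$ for which there exist $p_1<\cdots<p_m$ with $S(p_1,\cdots,p_m)\subset N_f(x,V_i)$. Each $G_{i,m}$ is open in $A$ (it is a union, over finitely many choices of seed, of the open conditions $f^{s}(x)\in V_i$), and it is dense in $A$: given any open $U$ meeting $A$, Theorem~\ref{thm:TranIP} provides $x\in U\cap A$ with $N_f(x,V_i)$ containing an IP-set, hence seeds of every length. Thus $R_0=\bigcap_{i,m}G_{i,m}$ is a dense $G_\delta$ subset of $A$. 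Using the nested neighbourhoods $U^{(i,j)}$ and the transfer sets $C_f(U^{(i,j)},V_i)$ from the proof of Theorem~\ref{thm:TranIP}, together with Lemma~\ref{lem:ext_sum}, the finite seeds can be made coherent, so that every $x\in R_0$ in fact satisfies: for every open $V$ meeting $A$ the set $N_f(x,V)$ contains an infinite IP-set. In particular each such $x$ meets every $V_i$ under iteration, so $A\subset\overline{\orbp(x,f)}$.

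Next comes the heart of the argument: every $x\in R_0$ is weakly product recurrent. Let $y$ be a minimal point of an arbitrary system $(Y,g)$; I must show $(x,y)$ is recurrent for $f\times g$. I would reduce this to producing a single $q\in\beta\N$ with $qx=x$ and $qy=y$, using that $E_x\cap E_y=\set{q\in\beta\N:qx=x\text{ and }qy=y}$ is a closed subsemigroup of $(\beta\N,+)$ (it is closed because $q\mapsto qx$ is continuous, and a subsemigroup because $(q_1+q_2)x=q_1(q_2x)$), so that if it is non-empty it contains an idempotent, and an idempotent fixing $(x,y)$ makes the pair recurrent. Since $y$ is minimal there is a minimal idempotent $p$ with $py=y$, and the IP-transfer property of $x$ makes $E_x$ large, since every return set $N_f(x,V)$ lies in some idempotent of $\beta\N$. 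Here I would bring in the hypothesis that distal points are dense in $A$: a distal point $d$ close to $x$ is fixed by \emph{every} idempotent of $\beta\N$, and because $x$ has dense orbit in $A$ the weak mixing of $A$ forces $x$ to be proximal to such a $d$; combining the universal fixing property of $d$ with this proximality and the IP-structure of $N_f(x,\cdot)$, one lines up an element of $E_x$ with the minimal idempotent fixing $y$, producing the desired common $q$. This synchronization is where I expect the real difficulty to lie — an IP-set in the return times of $x$ need not meet the merely syndetic return times of a minimal point, so the argument genuinely needs Hindman's theorem and the idempotent structure of $\beta\N$ and cannot be reduced to a bare intersection of return sets; the density of distal points is precisely what repairs this gap.

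Finally I would rule out distality on a residual set. Every $x\in R_0$ has dense orbit in $A$, and weak mixing of order $2$ makes proximal pairs dense in $A\times A$; a routine Baire-category argument then shows that the set of $x\in A$ which are proximal to some point of $\overline{\orbp(x,f)}\setminus\set{x}$ is residual, so the distal points of $A$ form a meager set. A point proximal to another point of its orbit closure is not distal, hence by Theorem~\ref{distal} not product recurrent. Intersecting this residual set of non-distal points with the residual set $R_0$ of weakly product recurrent points yields a residual subset of $A$ consisting of points that are weakly product recurrent but not product recurrent, as required.
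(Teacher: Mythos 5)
Your proposal has two genuine gaps, and the second one is fatal. First, the residual set you build does not have the property you claim for it. Each $G_{i,m}$ only records the existence of a \emph{finite} IP-set $S(p_1,\cdots,p_m)\subset N_f(x,V_i)$, so $R_0=\bigcap_{i,m}G_{i,m}$ consists of points whose return sets contain arbitrarily long finite IP-sets. That does not yield an infinite IP-set: a union of longer and longer finite IP-sets translated far apart contains no infinite IP-set, so the upgrade you assert (``the finite seeds can be made coherent'') is exactly the missing step, not a consequence of the nesting in Theorem~\ref{thm:TranIP} --- that nesting produces \emph{one} point per open set with the infinite property; it does not make the infinite property a $G_\delta$ condition. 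The distinction is not cosmetic: the only available bridge to distal points is that their return sets are IP$^*$, i.e.\ meet every \emph{infinite} IP-set; nothing forces them to meet long finite IP-sets. This is precisely why the paper does not define its residual set through IP-sets at all: it fixes a dense sequence of distal points $\set{x_i}$ in $A$ and takes $D=\bigcap_{i,j}D_{i,j}$, where $D_{i,j}$ is the honestly open condition $N_f\bigl(x,B(x_i,\tfrac1j)\bigr)\cap N_f\bigl(x_i,B(x_i,\tfrac1j)\bigr)\neq\emptyset$, using Theorem~\ref{thm:TranIP} together with the IP$^*$ property of distal return times only to prove each $D_{i,j}$ is dense.

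Second, and decisively, the heart of the matter --- that the points of your residual set are weakly product recurrent --- is not proved. You reduce it to finding $q\in\beta\N$ with $qx=x$ and $qy=y$, and then say one ``lines up an element of $E_x$ with the minimal idempotent fixing $y$,'' adding that this is where you expect the real difficulty to lie; that is an acknowledgement of the gap, not an argument. No mechanism is given, and the ingredient you invoke points the wrong way: proximality of $x$ to a distal point $d$ produces an idempotent $u$ with $ux=ud=d$, which moves $x$ off itself rather than fixing it. The paper's mechanism is elementary and entirely different: for $x\in D$, a distal $x_i\in U$ and a minimal $y$, the pair $(x_i,y)$ is uniformly recurrent by Furstenberg's theorem, so $N_f(x_i,B(x_i,\eps))\cap N_g(y,V)$ is syndetic with some gap $M$; choosing $W\ni x_i$ with $\diam f^j(W)<\eps$ for $0\le j\le M$, the synchronization $t\in N_f(x,W)\cap N_f(x_i,W)$ furnished by membership in $D$ gives some $j\le M$ with $t+j\in N_g(y,V)$ and $f^{t+j}(x)\in B(x_i,2\eps)\subset U$. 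Your $R_0$ records no synchronization with the $x_i$ whatsoever, so this argument cannot even be run on it as defined. (Your last step is also soft: the set of $x$ proximal to some other point of its orbit closure is a projection of a $G_\delta$ set and is not ``routinely'' residual; the paper gets non-distality for free, since each $x\in D$ is by construction proximal to every $x_i$.)
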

\begin{proof}
Fix any sequence $\set{x_i}_{i=1}^\infty\subset A$ dense in $A$ which consists of distal points in $A$.
We consider the subset $D\subset A$ defined as follows: a point $x\in A$ belongs also to $D$ if for any point $x_i$ and any open set $U\ni x_i$
we have $N_f(x,U)\cap N_f(x_i,U)\neq \emptyset$.

First, let us prove that $D$ is residual in $A$.
Obviously $D$ is a $G_\delta$ subset, as $D=\bigcap_{i,j=1}^\infty D_{i,j}$ where $D_{i,j}$ is the open set consisting of all points $x\in A$ such that
$$N_f(x,B(x_i,\frac{1}{j})) \cap N_f(x_i, B(x_i,\frac{1}{j}))\neq \emptyset.$$ It remains to show that each $D_{i,j}$ is a dense subset of $A$.
Fix any $i,j>0$ and
any open set $W$ intersecting $A$. Let $x\in W\cap A$ be a point obtained from Theorem~\ref{thm:TranIP}.
By the choice of $x$ there is an IP-set $P\subset N_f(x,B(x_i,\frac{1}{j}))$, and so $P\cap N_f(x_i,B(x_i,\frac{1}{j}))\neq \emptyset$ as $x_i$ is a distal point (we apply again the characterization of product recurrence \cite[Theorem 9.11]{FurBook}).
Thus $D_{i, j}$ is dense in $A$, and then $D$ is residual in $A$.

By the construction, $\overline{\orbp(x,f)}\supset A$ for any $x\in D$. 
It remains to show that each point in $D$ is weakly product recurrent, since it is easy to verify that it cannot be product recurrent. Namely, if $x$ is product recurrent which is equivalent to say that $x\in D$ is distal, then a contradiction to the fact of $A\subset \overline{\orbp(x,f)}$, since it is easy to verify that in that case $\overline{\orbp(x,f)}$ contains a point (other than $x$ itself) proximal to $x$. In fact, by the construction of $D$ each point $x_i$ is proximal to $x$.

To finish the proof, fix any $x\in D$ and any uniformly recurrent point $y$ in a dynamical system $(Y,g)$, and next fix any open neighborhood $U\subset X$ of $x$ and any open neighborhood $V\subset Y$ of $y$. There is $i\in \N$ such that $x_i\in U$, and so by \cite[Theorem 9.11]{FurBook} the pair $(x_i,y)$ is uniformly recurrent. We choose $\eps>0$ with $B(x_i,2\eps)\subset U$. Since $(x_i, y)$ is uniformly recurrent, the set $N_f(x_i,B(x_i,\eps))\cap N_g(y,V)$ is syndetic, and so there is $M\in \N$ such that it intersects any block of consecutive $M$ integers in $\N$. Now we choose an open set $W\ni x_i$ such that diameter of each $f^j(W)$ is smaller than $\eps$ for $j=0, 1, \cdots, M$.
But $x\in D$, so by the definition of $D$ there is
$t\in
N_f(x,W)\cap N_f(x_i,W)$. Then there is $j\in \{0, 1, \cdots, M\}$ such that
$t+j\in N_f(x_i,B(x_i,\eps))\cap N_g(y,V)$ and additionally $d(f^{t+j}(x),f^{t+j}(x_i))\le\eps$, because the diameter of $f^j(W)$ is smaller than $\eps$ and both $f^t (x)$ and $f^t (x_i)$ are contained in $W$. This gives
$f^{t+j}(x)\in B(x_i,2\eps)\subset U$ and therefore $N_f(x,U)\cap N_f(y,V)\neq \emptyset$, which completes the proof.
\end{proof}

As another application of Theorem~\ref{thm:TranIP}, we have the following result, which was suggested to us by the anonymous referee of the paper.

\begin{cor} \label{referee}
Let $A\subset X$ be a 
weakly mixing set of order $2$ for $(X,f)$. If additionally distal points are dense in $A$ then $A$ is weakly mixing of all orders.
\end{cor}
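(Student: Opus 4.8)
The plan is to reduce weak mixing of order $n$ to a single simultaneous transition and then to produce that transition with the help of Theorem~\ref{thm:TranIP} and an idempotent in $\beta\N$. Fix $n\geq 2$ and open sets $V_1,U_1,\dots,V_n,U_n$ meeting $A$. By definition it suffices to find one $k>0$ together with points $z_i\in V_i\cap A$ satisfying $f^k(z_i)\in U_i$ for all $i$; equivalently, writing $\mathbf z=(z_1,\dots,z_n)$ and letting $F=f\times\cdots\times f$ act on $X^n$, it suffices that some point $\mathbf z$ of $(V_1\cap A)\times\cdots\times(V_n\cap A)$ has its $F$-orbit meeting the open set $\mathbf U=U_1\times\cdots\times U_n$.

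First I would fix the data. For each $i$, Theorem~\ref{thm:TranIP} applied to the open set $V_i$ yields a point $x_i\in V_i\cap A$ whose transition set $N_f(x_i,W)$ contains an IP-set for every open $W$ meeting $A$; in particular $\overline{\orbp(x_i,f)}\supset A$. Using that distal points are dense in $A$, I would then choose a distal point $w_i\in U_i\cap A$ for each $i$. Since a finite product of distal points is distal, the tuple $\mathbf w=(w_1,\dots,w_n)$ is a distal point of $(X^n,F)$; consequently $p\mathbf w=\mathbf w$ for every idempotent $p\in\beta\N$, and for every neighbourhood $\mathbf W$ of $\mathbf w$ the return set $N_F(\mathbf w,\mathbf W)$ is an $\mathrm{IP}^*$-set, i.e.\ it meets every IP-set.

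The heart of the argument is to show that the distal tuple $\mathbf w$ lies in the $F$-orbit closure of $\mathbf x=(x_1,\dots,x_n)$, i.e.\ that for every product neighbourhood $\mathbf W=W_1\times\cdots\times W_n$ of $\mathbf w$ one has $\bigcap_i N_f(x_i,W_i)\neq\emptyset$. Granting this, the continuous map $p\mapsto p\mathbf x$ surjects onto the orbit closure, so I may pick an ultrafilter $p\in\beta\N$ with $p\mathbf x=\mathbf w$; then $w_i=px_i$ lies in the open set $U_i$, whence $N_f(x_i,U_i)\in p$ for every $i$, and because $p$ is a filter the intersection $\bigcap_i N_f(x_i,U_i)\in p$ is nonempty. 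Any $k$ in this intersection works with $z_i=x_i\in V_i\cap A$, giving weak mixing of order $n$; as $n$ was arbitrary, $A$ would be weakly mixing of all orders.

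To produce the simultaneous transition I would play the IP-sets furnished by Theorem~\ref{thm:TranIP} against the $\mathrm{IP}^*$-structure coming from the distal tuple: for each $i$ the set $N_f(x_i,W_i)$ contains an IP-set, while $N_F(\mathbf w,\mathbf W)$ is $\mathrm{IP}^*$, so I would invoke Hindman's theorem (an IP-set met with an $\mathrm{IP}^*$-set again contains an IP-set, and $\mathrm{IP}^*$-sets form a filter) to synchronize the $n$ coordinatewise transition sets with the single return set of $\mathbf w$ and extract a common instant. I expect this to be the main obstacle: the output of Theorem~\ref{thm:TranIP} is only coordinatewise (it gives $w_i\in\overline{\orbp(x_i,f)}$ for each $i$ separately), whereas what is needed is the genuinely simultaneous statement $\mathbf w\in\overline{\orbp(\mathbf x,F)}$, and closing this gap should be possible precisely because the target tuple is distal, so that its return times form an $\mathrm{IP}^*$-set dual to the IP-sets produced by weak mixing of order $2$. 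Carrying this synchronization out rigorously — rather than the trivial case in which source and target coincide — is where the real work lies.
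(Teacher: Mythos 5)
Your inventory of tools is exactly right -- Theorem~\ref{thm:TranIP} for IP-sets of transfer times, density of distal points, the fact that a finite product of distal points is distal, and the $\mathrm{IP}^*$ property of return sets of distal points are precisely the ingredients of the paper's proof -- but the step you yourself flag as ``where the real work lies'' is genuinely missing, and the tool you propose for it cannot close the gap. You need a single time in $\bigcap_i N_f(x_i,W_i)$, where all you know is that each $N_f(x_i,W_i)$ contains an IP-set $P_i$ coming from a separate application of Theorem~\ref{thm:TranIP}. Hindman's theorem and the filter property of $\mathrm{IP}^*$-sets let you intersect \emph{one} IP-set with finitely many $\mathrm{IP}^*$-sets; they say nothing about intersecting the sets $P_1,\dots,P_n$ with one another, and IP-sets emphatically do not form a filter: two IP-sets can be disjoint (e.g.\ finite sums of distinct even powers of $2$ versus finite sums of distinct odd powers of $2$). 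Playing the $\mathrm{IP}^*$-set $N_F(\mathbf{w},\mathbf{W})$ against each $P_i$ separately only produces times $t_i$ depending on $i$, which is exactly the coordinatewise statement you started from. Moreover, your intermediate claim $\mathbf{w}\in\overline{\orbp(\mathbf{x},F)}$ is strictly stronger than what the corollary needs, since it insists that the transitioning points be the specific points $x_i$; nothing in the paper establishes this, and it is unclear whether it is even true.

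The paper avoids this obstruction by never intersecting two IP-sets. It proves $\bigcap_{i=1}^n N_f(U_i\cap A,V_i)\neq\emptyset$ by upgrading one coordinate at a time: at each inductive step exactly one coordinate's transfer set receives an IP-set from Theorem~\ref{thm:TranIP}, while \emph{all} remaining $n-1$ coordinates are bundled into a single distal tuple whose return set is $\mathrm{IP}^*$, so the relevant intersection is one IP-set against one $\mathrm{IP}^*$-set and is therefore non-empty. The new common time $m$ obtained this way is then composed with the common time $k$ supplied by the inductive hypothesis, using shrunken open sets $U_i'$ with $f^k(U_i')\subset V_i$, so that $m+k$ realizes the simultaneous transitions for the enlarged family. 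If you want to salvage your plan, this is the structure to import: upgrade a single ``source'' coordinate per step and let a distal tuple absorb all the others; the price is that the points realizing the simultaneous transition change from step to step, so you must give up on keeping $z_i=x_i$ fixed throughout.
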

\begin{proof}
Fix any $n> 2$ and any open sets $U_1, V_1, \cdots, U_n, V_n$ intersecting $A$. It suffices to prove that $\bigcap_{i=1}^n N_f (U_i\cap A, V_i)\neq \emptyset$.

First, by assumptions we may take distal points $x_i\in V_i\cap A$ for each $i= 2, \cdots, n$. By the definitions, clearly $(x_2, \cdots, x_n)$
is distal for $f\times \cdots \times f$, and hence the following set (which is a subset of $\bigcap_{i= 2}^n N_f (V_i\cap A,V_i)$):
$$
N_{f\times \cdots\times f} ((x_2,\cdots,x_n), V_2\times \cdots \times V_n)\
$$
has a non-empty intersection with any IP-set (the same arguments as in Corollary \ref{cor:Fur} apply).
As a consequence of Theorem~\ref{thm:TranIP} we obtain that $N_f (U_1\cap A, V_1)$ contains an IP-set, thus
$$
N_f(U_1\cap A,V_1)\cap \bigcap_{i= 2}^n N_f (V_i\cap A,V_i)\neq \emptyset.
$$

Now, let us assume that, for some $1\leq j<n$ we have
$$
\bigcap_{i= 1}^j N_f (U_i\cap A,V_i)\cap \bigcap_{l= j+ 1}^n N_f (V_l\cap A, V_l)\neq \emptyset.
$$
Then there is $k>0$ and open sets $U_1', \cdots, U_n'$ intersecting $A$ such that $f^k(U_i')\subset V_i$ for each $i=1,\cdots,n$, and  $U_i'\subset U_i$ for $1\leq i\leq j$ and $U_l'\subset V_l$ for $l= j+ 1, \cdots, n$. Repeating previous arguments we obtain
that the following set of integers is non-empty:
$$
M\doteq N_f(U_{j+ 1}\cap A, U_{j+ 1}')\cap \bigcap_{i\in \{1, \cdots, j, j+ 2, \cdots, n\}} N_f (U_i'\cap A,U_i')\neq \emptyset.
$$
If we take any $m\in M$ then it is easy to check that
$$
m+ k\in N_f(U_{j+ 1}\cap A, f^k (U_{j+ 1}'))\cap \bigcap_{i\in \{1, \cdots, j, j+ 2, \cdots, n\}} N_f (U_i'\cap A, f^k (U_i')),
$$
and so
$$
\bigcap_{i= 1}^{j+ 1} N_f (U_i\cap A,V_i)\cap \bigcap_{l= j+ 2}^n N_f (V_l\cap A, V_l)\neq \emptyset.
$$
Hence,
by induction we eventually obtain that $\bigcap_{i=1}^n N_f(U_i\cap A,V_i)\neq \emptyset$, completing the proof.
\end{proof}

The following example shows that there are systems fulfilling assumptions of Corollary~\ref{cor:distal_wpr} which can not be covered by results of \cite{PO_AIF}.

\begin{exmp}
Let $R$ be an irrational rotation of the unit circle $\Cir$ and let $T$ be the standard tent map on the unit interval.
Consider $F=R\times T$ acting on $X=\Cir \times [0,1]$. Note that the set $A=\set{x}\times [0,1]$
is a weakly mixing set for any $x\in \Cir$. To see this, fix any open sets $W_1,\cdots, W_n\subset X$ intersecting $A$.
There exist open intervals $V_1,\cdots, V_n\subset [0,1]$ and $U\subset \Cir$ such that $U\times V_i\subset W_i$ and $U\times V_i\cap A\neq \emptyset$ (i.e. $x\in U$)
for all $i=1,\cdots,n$. But $T$ is the tent map, so there exists $N>0$ such that $T^k(V_i)=[0,1]$ for every $k>N$ and $i= 1, \cdots, n$. There also exists $K>N$ such that $R^K(x)\in U$.
This shows that $F^K(W_i\cap A)\supset \set{R^K(x)}\times [0,1]$ and so $F^K(W_i\cap A)\cap W_j\neq \emptyset$ for any $i,j=1,\cdots,n$.
Indeed $A$ is a weakly mixing set.

Additionally $A$ contains a dense set of distal points, since any periodic point of $T$ contained in the fibre $A$ will generate a distal point for $F$. This shows, by Corollary~\ref{cor:distal_wpr}, that there are many weakly product recurrent but not product recurrent points for $F$.
But there is no a set in $X$ which can fulfil the regular condition of return times required by the property \textbf{(P)}.
Simply, if we fix any $z\in X$ then for every open set $U\times V\subset X$ and every $K>0$ there is $n\in \N$ such that $\pi_1(F^{nK}(z))=R^{nK}(\pi_1(z))\not\in U$,
where $\pi_1$ is the projection onto the first coordinate, and so $F^{nK}(z)\not\in U\times V$.
\end{exmp}

The next example (constructed in Theorem~\ref{thm10}) shows that assumptions of Corollary~\ref{cor:distal_wpr} can not be weakened too much. Strictly speaking, it is not enough if a weakly mixing set is contained in the closure of distal or even regular uniformly recurrent points.

First, let us recall some basic facts on Toeplitz flows (a more detailed exposition on properties of Toeplitz flows can be found in \cite{DO}). Suppose that $X_\omega$ is a Toeplitz flow,
that is $X_\omega=\overline{\orbp(\omega,\sigma)}$ for some Toeplitz sequence $\omega$.
For $x\in X_\omega$ and an integer $l>0$, we denote
$$
\Per_l(x)=\set{n\in \N\; : \; x(n)=x(n+kl) \text{ for every }k=1,2,\cdots}.
$$
As $\omega$ is a Toeplitz sequence then $\N=\bigcup_{m}\Per_m(\omega)$. By an \emph{essential period} of $\omega$ we mean any $s$ such that $\Per_s(\omega) \neq \emptyset$ and does not coincide
with $\Per_{k} (\omega)$ for any $k < s$. A period structure of $\omega$ is any sequence
$\textbf{s} = \set{s_m}_{m=1}^\infty$ of essential periods such that each $s_m$ divides $s_{m+1}$ and
$\N=\bigcup_{m}\Per_{s_m}(\omega)$. It is known that a periodic structure always exists, and $X_\omega$ is an almost 1-1 extension of the odometer (inverse limit with addition $(\text{mod }s_m)$ on each coordinate):
$$G_\mathbf{s}= \overleftarrow{\lim_m} \ \{0, 1, \cdots, s_m- 1\}$$
which is well defined as the subset consisting of all ``paths"
$$(j_1, j_2, \cdots)\in \prod_m \ \{0, 1, \cdots, s_m- 1\}$$ such that $j_{m+ 1}= j_m$ (mod $s_m$) for each $m\in \N$. So we will always assume that a periodic structure is fixed together with
the factor map $\pi_\omega\colon X_\omega \ra G_\mathbf{s}$. If $x\in X_\omega$ is not a Toeplitz sequence itself, then it may happen that periodic parts do not cover the whole $\N$, that is the \emph{aperiodic part of $x$}
$$
\Aper(x):=\N \setminus \bigcup_{m=1}^\infty \Per_{s_m}(x)
$$
is non-empty.
However it can also be proved that if we fix any $\mathbf{j}\in G_\mathbf{s}$ then $\Aper(x)=\Aper(y)$ for every $x,y\in \pi_\omega^{-1}(\mathbf{j})$ (for further properties of the set $\Aper(x)$  we refer again to \cite{DO}), in particular we can write $\Aper(\mathbf{j}):=\Aper(x)$. Assume that $\Aper(x)$ is infinite and enumerate its elements, say $\Aper(x)=\set{0\leq n_1<n_2<\cdots}$. Then by the \emph{aperiodic readout} of $x$ we mean the sequence
$$
y= x|_{\Aper(x)}= x[n_1]x[n_2]\cdots.
$$
In other words, we read and write down symbols along $\Aper(x)$. For every $\mathbf{j}\in G_\mathbf{s}$ with $\#\Aper(\mathbf{j})=\infty$, let $Y_\mathbf{j}$ be the set of all possible aperiodic readouts of elements in $\pi_\omega^{-1}(\mathbf{j})$.

\begin{thm}\label{thm10}
There is a minimal dynamical system with dense distal points, such that it contains weakly mixing sets and
any of its weakly mixing sets does not contain weakly product recurrent points.
\end{thm}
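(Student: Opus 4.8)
The plan is to realize the required example as a carefully engineered Toeplitz flow $X_\omega$ and to play the positive synchronization results against the rigidity of the odometer factor. First I would fix a period structure $\mathbf{s}=\set{s_m}_{m=1}^\infty$ with fast-growing ratios and build a Toeplitz sequence $\omega$ over a two-letter alphabet by an explicit hole-filling construction: at stage $m$ I make $s_m$ an essential period by repeating a pattern on most residues modulo $s_m$, while deliberately reserving, at every stage, an infinite ``free'' collection of coordinates on which no periodic constraint is ever imposed. Two competing requirements must hold simultaneously: (a) the genuine Toeplitz sequences (equivalently, the points with empty aperiodic part, which are exactly the distal points of $X_\omega$ since $X_\omega$ is an almost $1$--$1$ extension of the odometer $G_\mathbf{s}$) stay dense --- this is automatic, as such points are residual in any minimal almost $1$--$1$ extension; and (b) for a suitable $\mathbf{j}\in G_\mathbf{s}$ with $\#\Aper(\mathbf{j})=\infty$ the space of aperiodic readouts $Y_\mathbf{j}$ is as rich as possible, e.g.\ a full shift. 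The hole-filling must leave enough freedom along $\Aper(\mathbf{j})$ that the readout coordinate can be prescribed arbitrarily on any finite window and then realized by an appropriate odometer return; this is what turns the fibre $A=\pi_\omega^{-1}(\mathbf{j})$ into a weakly mixing set, by an argument modelled on the $R\times T$ example preceding this theorem but taking place inside a minimal system.

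Once the flow is built, I would record the soft facts. Minimality of $X_\omega$ and density of its distal points are standard for Toeplitz flows (see \cite{DO}). For the weak mixing of $A$ I would verify the defining condition directly: given open sets meeting $A$, their traces correspond to finite readout windows, and since readouts range over a full shift while the odometer is minimal, a single return time $k$ with $s_l\mid k$ for the appropriate scale $l$ simultaneously steers all the windows into their prescribed targets.

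The heart of the proof is the negative statement, and here I would first perform a reduction. Every weakly mixing set $A'$ of order $2$ must project under $\pi_\omega$ to a single point of $G_\mathbf{s}$, because the image of a weakly mixing set under a factor map is again weakly mixing or a singleton, and the odometer, being equicontinuous, admits no weakly mixing subset with two points. Hence $A'\subset\pi_\omega^{-1}(\mathbf{j}')$ for some $\mathbf{j}'$, and since $A'$ is perfect we must have $\#\Aper(\mathbf{j}')=\infty$ (otherwise the fibre is a single point); in particular $A'$ contains no distal point, and the distal points of $X_\omega$ are not dense in $A'$. This is the structural reason why the mechanism of Corollary~\ref{cor:distal_wpr} --- synchronizing a transfer point produced by Theorem~\ref{thm:TranIP} with distal points lying \emph{inside} the weakly mixing set --- is no longer available.

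It then remains to produce, for each $x\in A'$, a uniformly recurrent point $m$ in some minimal system with $(x,m)$ not recurrent, and the de-synchronization step is exactly where the construction of $\omega$ must be fine-tuned; I expect it to be the main obstacle. The idea is to exploit that any return time of $x$ to a small cylinder is forced to lie in an odometer congruence class $s_l\Z$ and, at the same time, to match the aperiodic readout of $x$ on the corresponding window. Because the readout subshift is weakly mixing, $x$ is in no sense distal along the aperiodic coordinate, so this matching condition is rigid and its solution set, although syndetic, is highly structured. I would therefore design an auxiliary minimal test point $m$ --- built, like the odometer, from the data $\mathbf{s}$ but arranged so that its own syndetic set of return times ``reads'' the aperiodic coordinate --- whose return times are disjoint from those of $x$ for matched neighborhoods, witnessing that $(x,m)$ is not recurrent and hence that $x$ is not weakly product recurrent. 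The delicate part is verifying genuine minimality of $m$ together with the disjointness of the two syndetic return sets; I anticipate this will force a thin, highly structured placement of $\Aper(\mathbf{j})$ inside $\N$, so that the readout freedom used to build $A$ in the positive direction becomes, in the negative direction, precisely the obstruction to matching the rigid return pattern of $m$.
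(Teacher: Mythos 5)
Your setup matches the paper's: a Toeplitz flow $X_\omega$ over an odometer $G_\mathbf{s}$ whose infinite-aperiodic fibres have full-shift readouts (the paper simply cites an explicit construction from K\r{u}rka's book rather than re-engineering $\omega$), density of distal points from the almost 1--1 extension structure, and the reduction that any weakly mixing set must lie inside a single fibre $\pi_\omega^{-1}(\mathbf{j})$ with $\#\Aper(\mathbf{j})=\infty$, because the equicontinuous odometer has no weakly mixing subsets. Two remarks on the positive direction: the paper gets \emph{existence} of weakly mixing sets from positive topological entropy via Blanchard--Huang, so your stronger (and only sketched) claim that the whole fibre is weakly mixing is not needed; this lets the paper avoid exactly the verification you wave at with ``a single return time $k$ with $s_l\mid k$ steers all windows.''

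The genuine gap is in the negative direction, which is the heart of the theorem, and you essentially concede it: your plan to build an auxiliary minimal ``test point'' $m$ outside $X_\omega$ with return times disjoint from those of $x$ is left entirely unexecuted (``I expect it to be the main obstacle,'' ``the delicate part is verifying genuine minimality of $m$\dots''), and no argument is given that such an $m$ exists for every $x$ in every weakly mixing set. The idea you are missing is that no auxiliary system and no fine-tuning of $\omega$ is needed: the de-synchronizing partner lives in the \emph{same fibre}. Given $x\in A'\subset \pi_\omega^{-1}(\mathbf{j})$, use $Y_\mathbf{j}=\Sigma_2$ to choose $y\in\pi_\omega^{-1}(\mathbf{j})$ whose aperiodic readout agrees with that of $x$ except at the first aperiodic coordinate, say position $K$; since points of one fibre share the periodic part, $x$ and $y$ then differ at exactly the one coordinate $K$. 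For any $n\geq 1$ the points $\sigma^n x$ and $\sigma^n y$ differ at most at the single coordinate $K-n\neq K$, so if $U,V$ are cylinder neighbourhoods of $x,y$ determined by words of length greater than $K$, no $n\geq 1$ can satisfy $\sigma^n x\in U$ and $\sigma^n y\in V$ simultaneously: that would force the one-coordinate discrepancy between the shifted points to sit at position $K$. Hence $(x,y)$ is not recurrent, while $y$ is uniformly recurrent for free, because $X_\omega$ is minimal; so $x$ is not weakly product recurrent. Your structural intuition (weakly mixing sets avoid distal points, and readout freedom is the obstruction to synchronization) is correct, but without this or some other concrete mechanism your proposal does not prove the statement.
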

\begin{proof}
Let $X_\omega$ be a Toeplitz flow which is an extension of an odometer $G_\textbf{s}$ via the factor map $\pi_\omega\colon X_\omega \ra G_\mathbf{s}$, such that $Y_{\mathbf{j}}=\Sigma_2$ for every $\mathbf{j}\in G_\mathbf{s}$ with $\#\Aper(\mathbf{j})=\infty$, where $\Sigma_2$ is the two-sided infinite sequence over the alphabet $\set{0,1}$ (one of possible constructions of such a flow is explicitly described in \cite{Kurka}). Since $X_\omega$ is an almost 1-1 extension of an odometer, the set of all distal points is dense in $X_\omega$. It can be proved that $X_\omega$ has positive topological entropy, and so by results of \cite{BH, PZ} it has at least one weakly mixing subset.

Observe that each odometer is equicontinuous and so contains no weakly mixing subsets. Now, if $A$ is a weakly mixing set of $X_\omega$ then it must be contained in a fibre $\pi_\omega^{- 1} (\mathbf{j})$ with $\# Y_{\mathbf{j}}=\infty$, which equivalently means that $\#\Aper(\mathbf{j})=\infty$. But then aperiodic part of elements of the fiber $\pi_\omega^{- 1} (\mathbf{j})$ equals to $\Sigma_2$.
Aperiodic part occurs at the same positions in each element of $A$ and furthermore any two points in the same fibre have the same symbols on the periodic part.

For any $x\in A$ let $y\in \pi_\omega^{- 1} (\mathbf{j})$ be such that $y\neq x$ and $(x|_{\Aper(\mathbf{j})})_{[1,\infty)}=(y|_{\Aper(\mathbf{j})})_{[1,\infty)}$. In other words, $x$ and $y$
are defined by points in $\Sigma_2$ which differ on the first position.
It means that $x_{[i,i+N]}=y_{[i,i+N]}$ for every $N\in \N$ and all $i>K$ where $K\in \N$ depends only on $\Aper(\mathbf{j})$ (in particular, it is independent of $N\in \N$). If we fix sufficiently small neighborhoods $U,V$ of $x,y$, respectively, then $x,y$ cannot return to $U$ and $V$ respectively synchronously (in fact, it is enough that $U,V$ are defined by cylinders of words longer than $K$).
This proves that $(x, y)$ is not recurrent, and so $x$ is not weakly product recurrent, since $y$ is uniformly recurrent as a member of a minimal system.
\end{proof}

\section{Weakly mixing sets and proximality}

As shown by results of \cite{PZ, PZpreprint} the limit behavior of points in a weakly mixing set can be quite complex. In this section we shall present some further discussions along this line.

We begin this section with the following easy observation.

\begin{lem}\label{thm:const_WM}
Let $A$ be a weakly mixing set and $x_1,\cdots, x_n\in A$. For every sequence $U_1,\cdots, U_n$ of open neighborhoods of points $x_1, \cdots, x_n$, respectively, there is a weakly mixing set $B$ such that $x_i\in B$ for all $i=1,\cdots,n$.
\end{lem}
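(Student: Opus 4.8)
The plan is to construct $B$ as a Cantor set by a Kuratowski--Mycielski type scheme, localized inside $\bigcup_{i=1}^n U_i$ and arranged so that each $x_i$ survives as a branch point; this in particular gives $x_i\in B$ for all $i$ (and $B\subset\bigcup_i U_i$). The one point to watch is that a closed subset of a weakly mixing set need not be weakly mixing, so every transition witnessing weak mixing of $B$ must be produced by a point that actually lies in $B$. The crucial simplification is that, by the definition of a weakly mixing set, to realize $f^k(V\cap B)\cap U\neq\emptyset$ I only need a point of $B$ in $V$ whose $k$-th image lands in the \emph{open} set $U$; I do not need that image to return to $B$. Since $A$ is perfect, any witness can be taken in $A$ and then retained in $B$ by passing a branch through it.

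Concretely, I would build a nested scheme of closed balls $\{E_w: w\in\{0,1\}^m,\ m\geq0\}$, each meeting $A$, with $\overline{E_{w0}},\overline{E_{w1}}\subset E_w$ disjoint and $\diam E_w\to0$, and set $B=\bigcap_m\bigcup_{|w|=m}E_w$. Because every $E_w$ meets the closed set $A$ and diameters shrink, each branch converges to a point of $A$, so $B\subset A$; branching guarantees $|B|\geq2$. I start from finitely many disjoint balls around $x_1,\dots,x_n$ placed inside the corresponding $U_i$ (possible since $A$ is perfect, so each $U_i\cap A$ is infinite), and at every level I reserve one designated sub-ball shrinking to $x_i$, so that $x_i\in B$ and $B\subset\bigcup_iU_i$.

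It remains to force weak mixing of all orders, which I do by a bookkeeping that processes, one at a time, every finite tuple of pairs of scheme-cylinders $((E_{a_1},E_{b_1}),\dots,(E_{a_r},E_{b_r}))$; since the cylinders form a neighborhood base for the relative topology of $B$, meeting all such requirements yields the full definition. When I process one such requirement, I apply weak mixing of $A$ of order $r$ (available as $A$ is weakly mixing of all orders) to the interiors $\Int E_{a_s},\Int E_{b_s}$ to obtain a single $k>0$ and points $p_s\in\Int E_{a_s}\cap A$ with $f^k(p_s)\in\Int E_{b_s}$; by continuity I pick a small closed ball $E'_s\ni p_s$ with $E'_s\subset E_{a_s}$ and $f^k(E'_s)\subset E_{b_s}$, and I keep $E'_s$ (which meets $A$) as a child in the refinement alongside the designated $x_i$-branches and enough further children to preserve the Cantor structure. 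Any $b_s\in B\cap E'_s$ then satisfies $f^k(b_s)\in E_{b_s}$, so $f^k(V_s\cap B)\cap U_s\neq\emptyset$ for the corresponding open sets. The main obstacle is exactly the need for a \emph{common} $k$ across the $r$ pairs together with witnesses inside $B$: the former is precisely what weak mixing of $A$ of order $r$ supplies, and the latter is automatic here because the witnesses $p_s$ lie in $A$ and only their images, which may leave $A$, are required to land in the prescribed open sets. Keeping the prescribed points $x_i$ while still branching enough is a routine reservation of branches.
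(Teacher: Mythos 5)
The paper itself gives no argument here --- its proof reads ``a simple construction using the definition; we leave details to the reader'' --- and your nested-ball Cantor scheme is precisely such a construction, carried out correctly: the key point you isolate (witnesses for weak mixing of $B$ must lie in $B$, but by the definition their images need only hit the prescribed \emph{open} sets, so closed balls $E'_s$ meeting $A$ with $f^k(E'_s)\subset E_{b_s}$ suffice) is exactly what makes the definition-chasing work, and the countable bookkeeping over tuples of cylinders plus perfectness of $A$ handles the rest. One routine refinement worth recording when you write it up: insert each witness ball $E'_s$ inside a \emph{terminal} (deepest, not yet subdivided) cell of the current scheme contained in $E_{a_s}$, rather than directly as a new child of $E_{a_s}$, so that the required disjointness from previously created cells is automatic even when the witness point $p_s$ happens to fall in or on the boundary of an already-existing descendant.
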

\begin{proof}
The proof will be finished by a simple construction using the definition. We leave details to the reader.
\end{proof}

Recall that for $x\in X$, its \emph{(positive) limit set} $\omega_f (x)$ is defined as the set of all points $y\in X$ such that $\lim_{k\rightarrow \infty} f^{n_k} x= y$ for some increasing sequence $\{n_1< n_2< \cdots\}$ in $\N$. The following example shows that points in weakly mixing sets do not have
to have good recurrence properties.

\begin{exmp}
There is a weakly mixing set $A$ such that $\omega_f(x)\cap A=\emptyset$ for some $x\in A$.
\end{exmp}
\begin{proof}
Take any non-minimal weakly mixing system $(X, f)$ containing a non-recurrent point $x$ with a minimal limit set. A trivial example is the two-sided full shift over the alphabet $\set{0,1}$. Next, let $U=X\setminus \overline{B(\omega_f(x),\eps)}$ where $0< \eps< \frac{1}{2} \dist(\omega_f(x),x)$, where as usual $\dist(\omega_f(x),x)$ is the distance between the set $\omega_f (x)$ and the point $x$. Then $x\in U$ and so it suffices to apply Lemma~\ref{thm:const_WM} with $x_1=x$ and $U_1=U$ obtaining a weakly mixing set $B\subset U$ such that $x\in B$. But then $\dist(B,\omega_f(x))>\eps$
and the result follows.
\end{proof}

By the above example we cannot guarantee that a member of a weakly mixing set $A$ will return to a neighborhood of $A$. In particular, points obtained
by application of Theorem~\ref{thm:TranIP} are very special. Despite of this difficulty, we still can guarantee some degree of synchronization of trajectories of points in $A$.
In the case of $X=A$ the following lemma follows directly from the definition. In the local case $A\varsubsetneq X$ we have to perform a more careful approximation of desired points.

\begin{lem}\label{lem:local_sync}
Let $A$ be a weakly mixing set and $x\in A$. Then for every open set $U$ intersecting $A$ and each $\eps>0$ there are $n\in \N$ and $y\in U\cap A$ with $d(f^n(x),f^n(y))\le \eps$.
\end{lem}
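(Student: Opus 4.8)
The plan is to reformulate the conclusion as a statement about meeting a shrinking neighbourhood of the diagonal: producing $n$ and $y\in U\cap A$ with $d(f^n(x),f^n(y))\le\eps$ is the same as producing some $n\ge 1$ for which the open set $U\cap A$ meets $f^{-n}\bigl(B(f^n(x),\eps)\bigr)$. First I would dispose of the case $X=A$, where the definition applies unobstructed: cover the compact space $X$ by finitely many balls $B(c_1,\eps/2),\dots,B(c_r,\eps/2)$, all of which meet $A=X$, and apply weak mixing of order $r$ with every source equal to $U$ and with targets $B(c_1,\eps/2),\dots,B(c_r,\eps/2)$. This yields a single time $n$ with $f^n(U\cap A)\cap B(c_i,\eps/2)\neq\emptyset$ for every $i$; since $f^n(x)$ lies in some $B(c_{i_0},\eps/2)$, the corresponding $y\in U\cap A$ with $f^n(y)\in B(c_{i_0},\eps/2)$ satisfies $d(f^n(x),f^n(y))<\eps$. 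This is the ``direct from the definition'' case.

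For $A\subsetneq X$ the obstruction is that the target balls must intersect $A$, so I can only cover $A$ itself by balls $B(a_1,\eps/4),\dots,B(a_r,\eps/4)$ with $a_i\in A$; the image $f^n(x)$ need not lie in this cover, and since $A$ is not invariant and $x$ need not be recurrent, $f^n(x)$ may drift permanently away from $A$. To get around this I would not synchronise with $x$ directly, but route an auxiliary point into a neighbourhood of $x$ and then let both orbits run together to a uniformly recurrent point of $\omega_f(x)$. Concretely, apply Theorem~\ref{thm:TranIP} to obtain $w\in U\cap A$ whose return-time set $N_f(w,V)$ contains an IP-set for every open $V$ meeting $A$; in particular $N_f(w,B(x,\rho))$ contains an IP-set $P_\rho$ for every $\rho>0$, so the orbit of $w$ approaches $x$ and shadows it for a while.

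Next, given $\rho>0$, let $u$ be an idempotent ultrafilter containing $P_\rho$ (such exists by Hindman's theorem), put $p:=ux$, a uniformly recurrent point of $\omega_f(x)$, and set $S=\set{s\in\N:f^s(x)\in B(p,\gamma)}$, which belongs to $u$. Using $u+u=u$ I would choose $m\in P_\rho$ with $-m+S\in u$ and then $j_0\in S\cap(-m+S)$; provided $\rho$ is small enough that the fixed map $f^{j_0}$ moves $B(x,\rho)$ by at most $\eps/4$, the time $n:=m+j_0$ gives $f^n(x)=f^m\bigl(f^{j_0}(x)\bigr)\in B(p,\gamma)$ (because $m+j_0\in S$) while $f^n(w)=f^{j_0}\bigl(f^m(w)\bigr)\in B(f^{j_0}(x),\eps/4)\subset B(p,\gamma+\eps/4)$ (because $j_0\in S$ and $f^m(w)\in B(x,\rho)$). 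A triangle-inequality estimate then yields $d(f^n(x),f^n(w))\le 2\gamma+\eps/4\le\eps$ for $\gamma$ small, and $y:=w\in U\cap A$ works.

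The hard part is precisely this last synchronisation. The delicate point is that the scale $\rho$ determines the IP-set $P_\rho$, hence the idempotent $u$, the point $p$, and the time $j_0$, yet $\rho$ must afterwards be taken small relative to the continuity modulus of $f^{j_0}$ --- a circular dependence forced by the failure of uniform continuity of $f^{j_0}$ for large $j_0$ together with the non-invariance of $A$. I expect to break the circularity by fixing a uniformly recurrent $p\in\omega_f(x)$ and $j_0$ \emph{first}, choosing $\rho$ accordingly, and only then applying Theorem~\ref{thm:TranIP} and intersecting the IP-set $P_\rho$ with the shifted visiting set $S-j_0$ via a compatible idempotent; should a one-shot selection resist, the fallback is to let $\rho\to 0$ and extract a convergent subsequence of the resulting pairs $\bigl(n_k,f^{n_k}(w)\bigr)$. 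Verifying that this intersection is non-empty, i.e.\ that the weak-mixing return times of $w$ can be matched with the recurrence times of the possibly non-recurrent orbit of $x$, is the crux of the argument.
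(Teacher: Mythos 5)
Your opening reduction and your treatment of the case $X=A$ are correct, but that is precisely the easy case which the paper explicitly sets aside (``in the case of $X=A$ the lemma follows directly from the definition''); the entire content of the lemma is the local case $A\subsetneq X$, and there your argument has a genuine gap --- the one you yourself flag as ``the crux.'' For each fixed $\rho$ your ultrafilter computation is fine: given an idempotent $u$ containing $P_\rho$, setting $p=ux$ does make $S=\set{s: f^s(x)\in B(p,\gamma)}$ a member of $u$, and idempotence produces $m\in P_\rho$ and $j_0\in S\cap(-m+S)$. But $j_0$ is only known to lie in a member of $u$, hence may be arbitrarily large, while your distance estimate needs $\rho$ small relative to the modulus of continuity of $f^{j_0}$. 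Neither proposed repair closes this loop. If you fix $p$ and $j_0$ \emph{first}, you destroy the only reason $S$ belonged to $u$: that membership came from $p$ being \emph{defined} as the $u$-limit $ux$ for the very idempotent that contains $P_\rho$; for a pre-assigned $p$ there is no ``compatible idempotent,'' and the required intersection $P_\rho\cap(S-j_0)$ has no reason to be non-empty --- an IP-set need not meet a translate of an arbitrary infinite set, and $S$ is just some infinite set here, because $x$ may be non-recurrent (recall the example preceding the lemma, where $\omega_f(x)\cap A=\emptyset$). The limiting fallback also fails: letting $\rho\to 0$ produces points $z_\rho=u_\rho w$ arbitrarily close to $x$ that synchronize (indeed are proximal) with $w$, but the function $y\mapsto\inf_n d(f^n(y),f^n(w))$ is only upper semicontinuous, so this property does not pass to the limit point $x$; extracting convergent subsequences of $(n_k,f^{n_k}(w))$ controls nothing about $d(f^{n_k}(x),f^{n_k}(w))$ at any single time. (A minor additional slip: for a non-minimal idempotent $u$, the point $ux$ need not be uniformly recurrent, only $u$-recurrent; this is harmless for your main computation but matters for the repair you sketch.)

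The missing idea is that no recurrence theory is needed at all: the paper's proof is a finite pigeonhole argument using weak mixing of all orders directly. Fix a finite open cover $V_1,\dots,V_k$ of $X$ by sets of diameter less than $\eps$, and inductively construct a common time $n_m$, pairwise distinct indices $i_1,\dots,i_m$, and open sets $U_1^{(m)},\dots,U_m^{(m)}\subset U$ meeting $A$ with $f^{n_m}(U_j^{(m)})\subset V_{i_j}$. If $f^{n_m}(x)$ lies in some $V_{i_l}$, any $y\in U_l^{(m)}\cap A$ finishes the proof; otherwise $f^{n_m}(x)$ lies in a fresh cell $V_{i_{m+1}}$, and --- this is the key use of $x\in A$ --- a small neighborhood $U_{m+1}^{(m)}$ of $x$ with $f^{n_m}(U_{m+1}^{(m)})\subset V_{i_{m+1}}$ is a legitimate \emph{target} for the weak mixing property (it meets $A$, even though $V_{i_{m+1}}$ itself may be far from $A$); weak mixing of order $m+1$ then yields a common delay $s$ and sets $U_j^{(m+1)}\subset U$ meeting $A$ with $f^s(U_j^{(m+1)})\subset U_j^{(m)}$, and one sets $n_{m+1}=n_m+s$. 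Since the indices are distinct, the process terminates after at most $k$ steps. This argument never asks whether the orbit of $x$ returns anywhere near $A$, which is exactly the obstruction your route through Theorem~\ref{thm:TranIP} and idempotents cannot overcome.
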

\begin{proof}
Fix any $x\in A$ and any $\eps>0$.
Let $U$ be an open set intersecting $A$ and let $\{V_1,\cdots, V_k\}$ be a cover of $X$ consisting of open sets with diameters less than $\eps$.
We will perform a construction of $y$ and $n$ in a finite number of steps (at most $k$).

By weak mixing of $A$ there are $i_1\in \{1, \cdots, k\}$, and integer $n_1>0$ and an open set $U_1^{(1)}\subset U$ intersecting $A$ such that
$f^{n_1}(U_1^{(1)})\subset V_{i_1}$, where $x\in V_{i_1}$.

Next, assume that for some $m\geq 1$ we have constructed open sets $U_1^{(m)},\cdots, U_m^{(m)}$ $\subset U$ intersecting $A$, an integer $n_m>0$ and pairwise distinct integers $i_1, \cdots, i_m\in \{1, \cdots, k\}$ such that
$f^{n_m}(U_j^{(m)})\subset V_{i_j}$ for each $j= 1, \cdots, m$. If $f^{n_m}(x)\not\in \bigcup_{j=1}^m V_{i_j}$, then we can choose $i_{m+1}\in \set{1, \cdots, k}\setminus \set{i_1,\cdots, i_m}$
and an open set $U_{m+1}^{(m)}$ containing $x$ (and so intersecting $A$) such that $f^{n_m}(U_{m+1}^{(m)})\subset V_{i_{m+1}}$. By weak mixing of $A$ there are open sets $U_j^{(m+1)}\subset U$ intersecting $A$ and $s>0$ such that $f^s(U_j^{(m+1)})\subset U_j^{(m)}$ for each $j=1,\cdots,m+1$. Now, if we put $n_{m+1}=n_m+s$ then, for each $j=1,\cdots, m+1$ we obtain that:
$$
f^{n_{m+1}}(U_j^{(m+1)})=f^{n_{m}}(f^s(U_j^{(m+1)}))\subset f^{n_{m}}(U_j^{(m)})\subset V_{i_j}.
$$

Obviously at some step $m\le k$ we cannot extend sequence $i_1, \cdots, i_m$ any further by the above procedure.
Hence, we have that $f^{n_m}(x)\in \bigcup_{j=1}^m V_{i_j}$, in particular $f^{n_m}(x)\in V_{i_l}$ for some $l\in \set{1, \cdots, k}$. But then by the construction $f^{n_m}(U_l^{(m)})\subset V_{i_l}$, and so if we fix any $y\in U_l^{(m)}\cap A\subset U\cap A$ then
$f^{n_m}(y),f^{n_m}(x)\in V_{i_l}$. We have just proved that
$d(f^{n_m}(y),f^{n_m}(x))< \eps$, as the diameter of $V_{i_l}$ is less than $\eps$,
which ends the proof.
\end{proof}

First, it was proved in \cite{KR69} that for weakly mixing systems the set of points $x$ at which
$\Prox (f) (x)$ is residual in $X$ is itself residual in $X$, that is, for $\Prox (f)$ there is a residual set
of parameters where sections are also residual. Later it was proved by Furstenberg in \cite{FurBook}
that $\Prox (f) (x)$ is residual for any $x$, provided that $(X, f)$ is minimal and weakly mixing.
Finally, Akin and Kolyada proved in \cite{AK03} that in a weakly mixing system $\Prox (f)(x)$ is dense
for every point $x\in X$ (hence residual, because it is always a $G_\delta$ set).

Now, we have enough
tools at hand to prove yet another extension of these classical results.

\begin{thm}\label{thm:proxcell}
For every weakly mixing set $A$ and every $x\in A$ the set $\Prox(f)(x)\cap A$ is residual in $A$.
\end{thm}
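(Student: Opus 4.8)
The plan is to realize $\Prox(f)(x)\cap A$ as a $G_\delta$ subset of $A$ whose defining open layers are all dense, and then to invoke the Baire category theorem, with Lemma~\ref{lem:local_sync} supplying the density of each layer.

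First I would rephrase proximality as the vanishing of an infimum: a point $y$ lies in $\Prox(f)(x)$ precisely when $\inf_{n}d(f^n(x),f^n(y))=0$. Indeed, if this infimum is $0$ then either infinitely many $n$ make the distance arbitrarily small, or else there is a single $n_0$ with $f^{n_0}(x)=f^{n_0}(y)$, and in the latter case $d(f^n(x),f^n(y))=0$ for all $n\ge n_0$; in every case one extracts an increasing sequence along which the distance tends to $0$, matching the definition. Consequently
\[
\Prox(f)(x)\cap A=\bigcap_{m=1}^\infty G_m,\qquad
G_m=\Bigl\{y\in A:\ \exists\,n\in\N,\ d(f^n(x),f^n(y))<\tfrac1m\Bigr\}.
\]
Each $G_m$ is a union over $n$ of sets relatively open in $A$ (the map $y\mapsto d(f^n(x),f^n(y))$ is continuous), hence $G_m$ is relatively open in $A$, and $\Prox(f)(x)\cap A$ is a $G_\delta$ subset of $A$.

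Next I would show that every $G_m$ is dense in $A$, and this is exactly where Lemma~\ref{lem:local_sync} enters. Fix $m$ and any open set $U$ with $U\cap A\neq\emptyset$. Applying Lemma~\ref{lem:local_sync} to the given point $x\in A$ with $\eps=\tfrac1{2m}$ yields an integer $n\in\N$ and a point $y\in U\cap A$ with $d(f^n(x),f^n(y))\le\tfrac1{2m}<\tfrac1m$, so that $y\in G_m\cap U$. Since $U$ was an arbitrary open set meeting $A$, the set $G_m$ is dense in $A$.

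Finally, because $A$ is closed in the compact metric space $X$, it is itself a complete metric space, so the Baire category theorem applies to it: the set $\Prox(f)(x)\cap A=\bigcap_m G_m$, being a countable intersection of dense relatively open subsets of $A$, is residual in $A$. I do not expect a serious obstacle here, since the genuinely dynamical content---the local synchronization of the trajectory of $x$ with points spread throughout $A$---has already been isolated in Lemma~\ref{lem:local_sync}; the only step requiring a little care is the reformulation of proximality as vanishing of the infimum of the distances, so as to be sure the $G_\delta$ description captures the proximal cell exactly.
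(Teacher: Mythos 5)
Your proof is correct and takes essentially the same route as the paper: both arguments introduce the relatively open layers $G_m=\set{y\in A : d(f^n(x),f^n(y))<\tfrac1m \text{ for some } n}$, obtain their density in $A$ from Lemma~\ref{lem:local_sync}, and conclude by intersecting over $m$. The only (harmless) difference is that the paper uses just the inclusion $\Prox(f)(x)\cap A\supset\bigcap_m G_m$, which already yields residuality, whereas you verify the equality via the vanishing-infimum reformulation of proximality, so your version shows slightly more, namely that $\Prox(f)(x)\cap A$ is itself a dense $G_\delta$ subset of $A$.
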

\begin{proof}
Fix any $x\in A$ and $\eps>0$.
By Lemma~\ref{lem:local_sync}, if we consider the set $A_\eps$ consisting of all points $y\in A$ such that $d(f^n(x),f^n(y))<\eps$ for some $n>0$, then $A_\eps$ is a dense subset of $A$.
But it is also easy to verify that $A_\eps$ is an open subset of $A$. This, by the inclusion
$$
\Prox(f)(x)\cap A \supset \bigcap_{n=1}^\infty A_{\frac{1}{n}},
$$
proves that $\Prox(f)(x)$ is residual in $A$ which completes the proof.
\end{proof}

\begin{rem}
Theorem~\ref{thm:proxcell} provides another tool that can be used to prove Corollary~\ref{cor:distal_wpr} similarly
in the special case of weakly mixing sets. Unfortunately, so far we do not have any evidence that these cases are different.
\end{rem}

\section*{Acknowledgements}

The authors would like to thank Xiangdong Ye and Wen Huang for their valuable discussions and important comments about the preprint version of the manuscript.

Just before submission of the manuscript, Jian Li informed the authors
that he introduced arguments similar to the proof of Corollary~\ref{cor:Fur} in his proof
of \cite[Lemma 5.1]{lijian} which gives a more general tool than original Furstenberg's theorem. This, after some adjustment of
terminology, leads to a proof analogous to the one presented in Corollary~\ref{cor:Fur}. We are grateful to him for this remark.
We also express many thanks to the anonymous referee, whose remarks resulted in substantial
improvements to this paper, in particular, for his contribution to Corollary \ref{referee}.

The first author was supported by the Marie Curie European Reintegration
Grant of the European Commission under grant agreement no. PERG08-GA-2010-
272297. The second author was supported by FANEDD (grant no. 201018) and NSFC (gtant no. 11271078).

\end{document}